\providecommand{\U}[1]{\protect\rule{.1in}{.1in}}
\newtheorem{example}[theorem]{Example}
\newtheorem{remark}[theorem]{Remark}
\begin{document}

\title{How to transform and filter images using iterated function systems.}
\author{Michael~F.~Barnsley \thanks{Department of Mathematics Australian National
University Canberra, ACT, Australia (\texttt{michael.barnsley@math.anu.edu.au}%
).}
\and Brendan Harding \thanks{Department of Mathematics, Australian National
University (\texttt{brendan.harding@anu.edu.au}).}
\and Konstantin~Igudesman\thanks{Faculty of Mechanics and Mathematics Kazan State
University Kazan, Russian Federation (\texttt{kigudesm@yandex.ru}).}}
\maketitle

\begin{abstract}
We present a general theory of fractal transformations and show how it leads
to new type of method for filtering and transforming digital images. This work
substantially generalizes earlier work on fractal tops. The approach involves
fractal geometry, chaotic dynamics, and an interplay between discrete and
continuous representations. The underlying mathematics is established and
applications to digital imaging are described and exemplified.

\end{abstract}

\begin{keywords}
Iterated function systems, dynamical systems, fractal transformations.
\end{keywords}

\begin{AMS}
37B10, 54H20, 68U10
\end{AMS}

\pagestyle{myheadings} \thispagestyle{plain} \markboth{M.~F.~BARNSLEY,
B.~HARDING, K.~IGUDESMAN}{FRACTAL MASKS}

\section{Introduction}

Fractal transformations are mappings between pairs of attractors of iterated
function systems. They are defined with the aid of code space structures, and
can be quite simple to handle and compute. They can be applied to digital
images when the attractors are rectangular subsets of $\mathbb{R}^{2}$. They
are termed "fractal" because they can change the box-counting, Hausdorff, and
other dimensions of sets and measures upon which they act. In this paper we
substantially generalize and develop the theory and we illustrate how it may
be applied to digital imaging. Previous work was restricted to fractal
transformations defined using fractal tops.

Fractal tops were introduced in \cite{barnsley} and further developed in
\cite{superfractals, monthly, germany, igudesman}. The main idea is this:
given an iterated function system with a coding map and an attractor, a
\textit{section} of the coding map, called a tops function, can be defined
using the "top" addresses of points on the attractor. Given two iterated
function systems each with an attractor, a coding map, and a common code
space, a mapping from one attractor to the other can be constructed by
composing the tops function, for the first iterated function system, with the
coding map for the second system. Under various conditions the composed map,
from one attractor to the other, is continuous or a homeomorphism. In the
cases of affine and projective iterated function systems, practical methods
based on the chaos game algorithm \cite{barnchaos} are feasible for the
approximate digital computation of such transformations. Fractal tops have
applications to information theory and to computer graphics. They have been
applied to the production of artwork, as discussed for example in
\cite{barnsleynotices}, and to real-time image synthesis \cite{nickei}. In the
present paper we extend the theory and applications.

Much of the material in this paper is new. The underlying new idea is that
diverse sections of a coding map may be defined quite generally, but
specifically enough to be useful, by associating certain dynamical systems
with the iterated function system. These sections provide novel collections of
fractal transformations; by their means we generalize the theory and
applications of fractal tops. We establish properties of fractal
transformations, including conditions under which they are continuous. The
properties are illustrated by examples related to digital imaging.

A notable result, Theorem \ref{goldthm}, states the existence of nontrivial
fractal homeomorphisms between attractors of some affine overlapping iterated
function systems. The proof explains how to construct them.\ An example of one
of these new homeomorphisms, applied to a picture of Lena, is illustrated in
Figure \ref{goldenlenna}.%
\begin{figure}[ptb]%
\centering
\includegraphics[
natheight=6.826900in,
natwidth=6.826900in,
height=3.4006in,
width=3.4006in
]%
{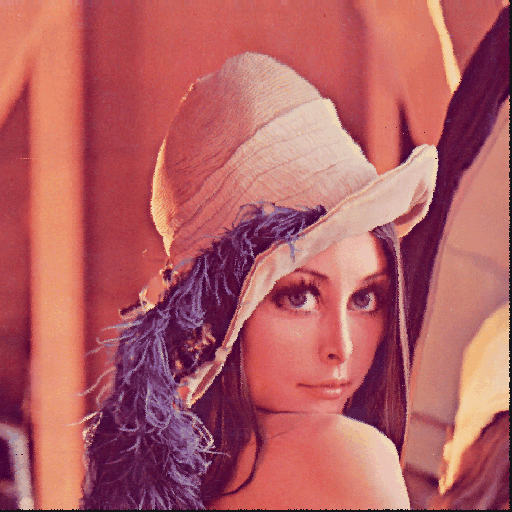}%
\caption{Lena after application of a fractal homeomorphism. See Example
\ref{goldenlennaex}.}%
\label{goldenlenna}%
\end{figure}

In Section \ref{ifssec} we review briefly the key definitions and results
concerning point-fibred iterated function systems on compact Hausdorff spaces.
Since this material is not well-known, it is of independent interest. The main
result is Theorem \ref{mainIFSthm}. This can be viewed as a restatement of
some ideas in \cite{kieninger}; it describes the relationship between the
coding map and the attractor of a point-fibred iterated function system.

In Section \ref{topsec} we define, and establish some general properties of,
fractal transformations constructed using sections of coding maps. In Theorem
\ref{sectionthm} we present some general properties of coding maps. Then we
use coding maps to define fractal transformations and, in Theorem
\ref{ctyfractaltransthm}, we provide sufficient conditions for a fractal
transformation to be continuous or homeomorphic.

In Section \ref{masksec} we define two different types of section of a coding
map: (i) with the aid of a \textit{masked} dynamical system; and (ii) with the
aid of fractal tops. Theorem \ref{maskthm} establishes the connection between
the masked dynamical system and a \textit{masked section} of the coding map.
Theorem \ref{maskbranchthm} includes a statement concerning the relationship
between the masked section to the coding map and the shift map. Here we also
establish the relationship between fractal tops and masked systems. A key
result, Theorem \ref{L:disjoint}, gives a condition under which the ranges of
different masked sections intersect in a set of measure zero. This enables the
approximate storage of multiple images in a single image, as illustrated in
Figure \ref{allmasked}.

In Section \ref{appsec} we apply and illustrate the theoretical structures of
Sections 2,3, and 4, in the context of digital imaging. Our goal is to
illustate the diversity of imaging techniques that are made feasible by our
techniques, and to suggest that fractal transformations have a potentially
valuable role to play in digital imaging. In Section \ref{appsec1} we
illustrate how fractal transformations may be applied to image synthesis, that
is to making artificial interesting and even beautiful pictures. Specifically
we explain how the technique of color-stealing \cite{barnsley} extends to
masked systems. In Section \ref{fhomsec} we apply fractal homeomorphisms,
using Theorem \ref{ctyfractaltransthm}, to transform digital images, for image
beautification, roughening, and special effects; in particular, we present and
illustrate Theorem \ref{goldthm} which extends the set of known affine fractal
homeomophisms. In Section \ref{filtsec} we consider the idea of composing a
fractal transformation, discretization, and the inverse of the transformation
to make idempotent image filters. In Section \ref{packsec} we apply Theorem
\ref{L:disjoint} to the approximate storage or encryption of multiple images
in a single image. In Section \ref{meassec} we provide a second technique for
combining several images in one: it combines invariant measures of a single
iterated function system with several sets of probabilities, to make a single
"encoded" image: approximations to the original images are revealed by the
application of several fractal homeomorphisms.

\section{\label{ifssec}Point-fibred iterated function systems}

Let $X$ be a nonempty compact Hausdorff space, and let $K(X)$ be the set of
nonempty compact subsets of $X$. It is known that $K(X)$ endowed with the
Vietoris topology is a compact Hausdorff space, see for example \cite[Theorem
2.3.5, p.17]{klein}. This encompasses the well-known fact that if $X$ is a
compact metric space then $K(X)$ endowed with the Hausdorff metric is a
compact metric space. Let $I=\left\{  1,2,...N\right\}  $ be a finite index
set with the discrete topology. Let $\left\{  f_{i}:X\rightarrow X|i\in
I\right\}  $ be a sequence of continuous functions. Following \cite{barnsley}%
$,$
\[
\mathcal{F}:=(X;f_{1},\ldots f_{N})
\]
is called an \textit{iterated function system} over $X$.

Following \cite[Definition 4.1.4, p.84]{kieninger} we define a map
\begin{equation}
\Pi:I^{\infty}\rightarrow K(X),\text{ }\sigma\mapsto\bigcap\limits_{k=1}%
^{\infty}f_{\sigma_{1}}\circ f_{\sigma_{2}}\circ\cdots\circ f_{\sigma_{k}}(X)
\label{codemapeq}%
\end{equation}
for all sequences $\sigma=\sigma_{1}\sigma_{2}\sigma_{3}...$ belonging to
$I^{\infty}$. The map is well-defined because $\Pi(\sigma)$ is the
intersection of a nested sequence of nonempty compact sets. The following
definition is based on \cite[Definition 4.3.6, p.97]{kieninger}.

\begin{definition}
Let $\mathcal{F}:=(X;f_{1},\ldots f_{N})$ be an iterated function system over
a compact Hausdorff space $X$. If $\Pi(\sigma)$ is a singleton for all
$\sigma\in I^{\infty}$ then $\mathcal{F}$ is said to be \textbf{point-fibred}%
$,$ and the \textbf{coding map} of $\mathcal{F}$ is defined by%
\[
\pi:I^{\infty}\rightarrow A\text{, }\left\{  \pi(\sigma)\right\}  =\Pi
(\sigma),
\]
where $A\subset X$ denotes the range of $\pi$.
\end{definition}

Theorem \ref{kieningerthm}, due to Kieninger, plays a central role in this
paper. It generalizes a classical result of Hutchinson \cite{hutchinson} that
applies when $X$ is a compact metric space and each $f\in\mathcal{F}$ is a contraction.

\begin{theorem}
\label{kieningerthm}Let $I^{\infty}$ have the product topology. If
$\mathcal{F}$ is a point-fibred iterated function system on a compact
Hausdorff space $X$ then the coding map $\pi:I^{\infty}\rightarrow A$ is continuous.
\end{theorem}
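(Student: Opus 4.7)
The plan is to verify continuity directly at a base of the product topology on $I^{\infty}$. A basis for the product topology is given by the cylinder sets $[\sigma_{1}\cdots\sigma_{k}] = \{\tau \in I^{\infty} : \tau_{j}=\sigma_{j}\text{ for }j=1,\ldots,k\}$, so it suffices to show that for each $\sigma \in I^{\infty}$ and each open $U \subset X$ containing $\pi(\sigma)$ there is some $k$ with $\pi([\sigma_{1}\cdots\sigma_{k}]) \subset U$. Writing $K_{k}^{\sigma} := f_{\sigma_{1}}\circ\cdots\circ f_{\sigma_{k}}(X)$, the definition $\{\pi(\tau)\} = \bigcap_{n\geq 1} f_{\tau_{1}}\circ\cdots\circ f_{\tau_{n}}(X)$ immediately gives $\pi(\tau)\in f_{\tau_{1}}\circ\cdots\circ f_{\tau_{k}}(X)$ for every $\tau$ and every $k$, whence $\pi([\sigma_{1}\cdots\sigma_{k}])\subset K_{k}^{\sigma}$. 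The theorem is thus reduced to the \emph{shrinking} statement: for every open $U\ni\pi(\sigma)$ there exists $k$ such that $K_{k}^{\sigma}\subset U$.

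The shrinking statement is the heart of the proof, and is where the point-fibred hypothesis and the compact Hausdorff structure come in. The sets $K_{k}^{\sigma}$ are nonempty (as continuous images of $X$), compact (since each $f_{i}$ is continuous and $X$ is compact), and nested $K_{k+1}^{\sigma}\subset K_{k}^{\sigma}$ (since $f_{\sigma_{k+1}}(X)\subset X$ gives $K_{k+1}^{\sigma}=f_{\sigma_{1}}\circ\cdots\circ f_{\sigma_{k}}(f_{\sigma_{k+1}}(X))\subset K_{k}^{\sigma}$). By the point-fibred hypothesis, $\bigcap_{k}K_{k}^{\sigma}=\{\pi(\sigma)\}$. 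Assume for contradiction that no such $k$ exists; then the sets $C_{k}:=K_{k}^{\sigma}\cap(X\setminus U)$ are nonempty closed subsets of the compact space $X$ (compact sets are closed in a Hausdorff space), still forming a decreasing chain. Any finite subfamily of $\{C_{k}\}$ contains its smallest element and hence is nonempty, so the family has the finite intersection property. Compactness of $X$ then forces $\bigcap_{k}C_{k}\neq\emptyset$, whereas $\bigcap_{k}C_{k}=\{\pi(\sigma)\}\cap(X\setminus U)=\emptyset$, the required contradiction.

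The main obstacle, and the only real subtlety, is avoiding reliance on a metric: without contractivity or a diameter argument one must replace "$\mathrm{diam}\,K_{k}^{\sigma}\to 0$" by the topological shrinking argument via the finite intersection property above. Everything else is routine: the identification of cylinder sets as a neighborhood base at $\sigma$ in $I^{\infty}$, and the definitional fact that $\pi(\tau)$ lies in each of the nested sets whose intersection defines it.
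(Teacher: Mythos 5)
Your proof is correct. The reduction is sound: cylinder sets do form a neighborhood base at each $\sigma$ in the product topology (since $I$ is finite and discrete), the inclusion $\pi([\sigma_{1}\cdots\sigma_{k}])\subset K_{k}^{\sigma}$ is immediate from the defining intersection, and your shrinking lemma --- if a decreasing sequence of nonempty compact sets in a compact Hausdorff space has intersection contained in an open $U$, then some term already lies in $U$ --- is exactly right, with the finite intersection property argument correctly replacing any metric/diameter estimate. The only point of comparison with the paper is that the paper does not prove the theorem at all: it cites Proposition 4.3.22 of Kieninger's monograph, where this continuity statement is established in the compact Hausdorff setting. So your argument is not so much an alternative route as a self-contained reconstruction of what the citation delegates; its value is that it makes transparent that continuity of $\pi$ needs nothing beyond compactness, the Hausdorff property, continuity of the $f_{i}$, and the point-fibred hypothesis $\bigcap_{k}K_{k}^{\sigma}=\{\pi(\sigma)\}$, whereas the paper's one-line proof buys brevity at the cost of sending the reader to an external source.
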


\begin{proof}
This follows from \cite[Proposition 4.3.22, p.105]{kieninger}.
\end{proof}

We define
\[
\mathcal{F}:K(X)\rightarrow K(X)\text{, }B\mapsto\bigcup_{f\in\mathcal{F}%
}f(B)\text{.}%
\]
By slight abuse of notation we use the same symbol $\mathcal{F}$ for the
iterated function system, the maps that it comprises, and the latter function.
We define $\mathcal{F}^{0}=i_{X}$, the identity map on $X$, and $\mathcal{F}%
^{k}=\mathcal{F}\circ\mathcal{F}^{k-1}$ for $k=1,2,...$ . The following
definition is a natural generalization of the notion of an attractor of a
contractive iterated function system, see for example \cite[definition on
p.1193 and Theorem 11.1, p.1206]{mcgehee}, \cite[p.107]{kieninger}, and also
\cite{barnchaos}.

\begin{definition}
\label{attractordef}Let $\mathcal{F}$ be an iterated function system on a
compact Hausdorff space $X$. An \textbf{attractor} of $\mathcal{F}$ is $A\in$
$K(X)$ with these properties: (i) $\mathcal{F}(A)=A$; (ii) there exists an
open set $U\subset X$ such that $A\subset U$ and
\begin{equation}
\lim_{k\rightarrow\infty}\mathcal{F}^{k}(B)=A \label{attractoreq}%
\end{equation}
for all $B\subset U$ with $B\in K(X)$. (The limit is with respect to the
Vietoris topology on $K(X)$.) The largest open set $\mathcal{B}\subset X$ such
that equation \ref{attractoreq} holds for all $B\subset\mathcal{B}$ with $B\in
K(X)$ is called the \textbf{basin} of $A$.
\end{definition}

The relationship between coding maps and attractors is provided by noting
that
\[
\pi(\sigma)=\lim_{k\rightarrow\infty}f_{\sigma|k}(a)\text{ where }f_{\sigma
|k}:=f_{\sigma_{1}}\circ f_{\sigma_{2}}\circ\cdots\circ f_{\sigma_{k}}%
\]
for $a\in X$. This leads to the following theorem.

\begin{theorem}
\label{mainIFSthm}If $\mathcal{F}$ is a point-fibred iterated function system
on a compact Hausdorff space $X$ then

(i) $\mathcal{F}:K(X)\rightarrow K(X)$ has a unique fixed-point $A\in K(X),$
i.e. $\mathcal{F}\left(  A\right)  =A$;

(ii) $A$ is the unique attractor of $\mathcal{F}$;

(iii) $A$ is equal to the range of the coding map $\pi$, namely%
\[
A=\pi(I^{\infty})\text{;}%
\]

(iv) the basin of $A$ is $X;$

(v) if $B\in K(X)$ then $\left\{  \pi(\sigma)\right\}  =\lim_{k\rightarrow
\infty}f_{\sigma|k}(B)$ for all $\sigma\in I^{\infty}.$
\end{theorem}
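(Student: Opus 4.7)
The plan is to prove the assertions in the order (v), then (iii) together with (i), then (iv), with (ii) as an immediate corollary.

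For (v), fix $\sigma\in I^{\infty}$ and $B\in K(X)$. The sets $f_{\sigma|k}(X)$ form a nested decreasing sequence of nonempty compact sets whose intersection is $\{\pi(\sigma)\}$ by the point-fibred hypothesis; a standard compactness argument (for every open $U\supseteq\{\pi(\sigma)\}$ the compact sets $f_{\sigma|k}(X)\setminus U$ decrease to $\emptyset$, hence some is already empty) shows that this sequence Vietoris-converges to $\{\pi(\sigma)\}$. Since $\emptyset\neq f_{\sigma|k}(B)\subseteq f_{\sigma|k}(X)$ and $K(X)$ is compact, every subsequential Vietoris limit of $(f_{\sigma|k}(B))$ is a nonempty closed subset of $\{\pi(\sigma)\}$, hence equals $\{\pi(\sigma)\}$; uniqueness of the cluster point forces full convergence.

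For (iii) and (i), Theorem~\ref{kieningerthm} shows that $\pi(I^{\infty})$ is compact. The identity $f_{i}\circ\pi=\pi\circ s_{i}$, where $s_{i}(\sigma)=i\sigma$, follows from $f_{i}(f_{\sigma|k}(a))=f_{(i\sigma)|(k+1)}(a)$ and continuity of $f_{i}$, whence $\mathcal{F}(\pi(I^{\infty}))=\bigcup_{i}f_{i}(\pi(I^{\infty}))=\bigcup_{i}\pi(s_{i}(I^{\infty}))=\pi(I^{\infty})$. For uniqueness of the fixed point, suppose $\mathcal{F}(A)=A$. Iterating gives $f_{\sigma|k}(A)\subseteq A$ for every $\sigma,k$, so by (v) one has $\pi(\sigma)\in A$ for every $\sigma$, proving $\pi(I^{\infty})\subseteq A$. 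Conversely, for $a\in A$, repeated use of $A=\bigcup_{i}f_{i}(A)$ recursively produces $\sigma_{k}\in I$ and $a_{k}\in A$ with $a=f_{\sigma|k}(a_{k})$, so $a\in\bigcap_{k}f_{\sigma|k}(X)=\{\pi(\sigma)\}$. Thus $A=\pi(I^{\infty})$, establishing (i) and (iii) simultaneously.

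For (iv) I would show $\mathcal{F}^{k}(B)\to A$ in the Vietoris topology for every $B\in K(X)$. The \emph{lower} half, that every $\pi(\sigma)\in A$ is a limit of some $f_{\sigma|k}(b)\in\mathcal{F}^{k}(B)$, is immediate from (v). The \emph{upper} half, that for every open $U\supseteq A$ eventually $\mathcal{F}^{k}(B)\subseteq U$, is the main technical obstacle. I would reduce it to a uniform version of (v): for every $\sigma$ and every open $V\ni\pi(\sigma)$ there exist a neighborhood $W$ of $\sigma$ in $I^{\infty}$ and a $k_{0}$ such that $f_{\sigma'|k}(X)\subseteq V$ whenever $\sigma'\in W$ and $k\geq k_{0}$. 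If this failed, metrizability of $I^{\infty}$ (since $I$ is finite) would yield sequences $\sigma^{(j)}\to\sigma$, $k_{j}\to\infty$, and $x_{j}\in f_{\sigma^{(j)}|k_{j}}(X)\setminus V$; since $\sigma^{(j)}|m$ eventually agrees with $\sigma|m$ for each fixed $m$, one gets $x_{j}\in f_{\sigma|m}(X)$ for $j$ large, and compactness of $X$ places any cluster point of $(x_{j})$ in $\bigcap_{m}f_{\sigma|m}(X)=\{\pi(\sigma)\}\subseteq V$, a contradiction. Granted uniformity, the upper half follows by the same scheme: any hypothetical sequence $x_{j}\in\mathcal{F}^{k_{j}}(B)\setminus U$ can be written $x_{j}=f_{\omega_{j}}(b_{j})$, the $\omega_{j}\in I^{k_{j}}$ extended to $\sigma^{(j)}\in I^{\infty}$, and a subsequence extracted with $\sigma^{(j)}\to\sigma$; uniform convergence then forces $x_{j}\to\pi(\sigma)\in A\subseteq U$, contradicting $x_{j}\notin U$. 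Finally (ii) is immediate: (iv) shows $A$ is an attractor whose basin is $X$, while (i) identifies any attractor, being a fixed point of $\mathcal{F}$, with $A$.
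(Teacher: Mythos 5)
Your argument is correct, but it is a genuinely different route from the paper's, because the paper does not prove Theorem \ref{mainIFSthm} at all: it simply cites Kieninger's Propositions 4.4.2 and 3.4.4, so all five assertions are imported from that monograph. Your decomposition --- prove (v) from the nested sets $f_{\sigma|k}(X)$ and point-fibredness, derive the conjugacy $f_{i}\circ\pi=\pi\circ s_{i}$ to get existence of the fixed point $\pi(I^{\infty})$, use (v) plus the recursive decomposition $A=\bigcup_{i}f_{i}(A)$ for uniqueness, then prove Vietoris convergence $\mathcal{F}^{k}(B)\rightarrow A$ for arbitrary $B\in K(X)$ to get (iv), with (ii) as a corollary --- is sound and makes the result self-contained, which is what the citation approach sacrifices for brevity. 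Two small remarks. First, your ``uniform version of (v)'' does not need metrizability or a contradiction argument: once $k_{0}$ is chosen with $f_{\sigma|k_{0}}(X)\subseteq V$, take $W$ to be the cylinder of sequences agreeing with $\sigma$ in the first $k_{0}$ places; then for $\sigma'\in W$ and $k\geq k_{0}$ one has $f_{\sigma'|k}(X)\subseteq f_{\sigma'|k_{0}}(X)=f_{\sigma|k_{0}}(X)\subseteq V$, which is the whole lemma. (Your contradiction argument is nevertheless valid, since $I$ is finite and cluster points in the compact space $X$ suffice.) Second, two steps rely on standard facts you leave implicit and should at least name: that $\{D\in K(X):D\subseteq A\}$ is Vietoris-closed (used when you conclude $\pi(\sigma)\in A$ from $f_{\sigma|k}(A)\subseteq A$ and (v); alternatively just note $f_{\sigma|k}(a)\in A$ for $a\in A$ and $A$ is closed), and that in a compact space a sequence with a unique cluster point converges (used in your proof of (v); the direct observation that $\emptyset\neq f_{\sigma|k}(B)\subseteq f_{\sigma|k}(X)\subseteq U$ eventually, for every open $U\supseteq\{\pi(\sigma)\}$, avoids this entirely). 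With those glosses your proof is complete and, unlike the paper's, exhibits (v) as the workhorse from which (i)--(iv) all follow.
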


\begin{proof}
This follows from \cite[Proposition 4.4.2, p.107, see also Proposition 3.4.4,
p.77]{kieninger}.
\end{proof}

The following remark tells us that if an iterated function system possesses an
attractor then it is point-fibred when restricted to a certain neighborhood of
the attractor. Necessary and sufficient conditions for an iterated function
system of projective transformations to possess an attractor are given in
\cite{BVW}.

\begin{remark}
Let $A$ be an attractor of an iterated function system $\mathcal{F}$ on a
compact Hausdorff space $X$, and let $\mathcal{B}$ be the basin of $A.$
Then, following \cite{mcgehee}, $\mathcal{F}$ defines an iterated closed
relation $r:=\{(x,f(x)):x\in X,f\in \mathcal{F}\}\subset X\times X$, and $A$
is an attractor of $r$. By \cite[Theorem 7.2, p.1193]{mcgehee} there exists
a compact neighborhood $V$ of $A$ such that $A\subset Int_{X}(V)$ and $\mathcal{F%
}\left( V\right) \subset Int_{X}(V).$ ($Int_{X}(S)$ denotes the interior of the set $%
S\subset X$ in the subspace topology induced on $S$ by $X$.) It follows that $\mathcal{F}%
|_{V}:=(V;f_{1}|_{V},f_{2}|_{V},...f_{N}|_{V})$ is a point-fibred iterated
function system on a compact Hausdorff space. The attractor of $\mathcal{F}%
|_{V}$ is $A.$
\end{remark}

\begin{definition}
Let $\mathcal{F}$ be a point-fibred iterated function system on a compact
Hausdorff space. The set $I^{\infty}$ is called the \textbf{code space}
of\ $\mathcal{F}$. A point $\sigma\in I^{\infty}$ is called an
\textbf{address} of $\pi(\sigma)\in A$.
\end{definition}

In the rest of this paper the underlying space $X$ is a compact Hausdorff
space. Also in the rest of this paper the symbols $\mathcal{F},\mathcal{G}%
,\mathcal{H}$ denote point-fibred iterated function systems on compact
Hausdorff spaces. We will say that an iterated function system is
\textit{injective} when all of the maps that it comprises are injective. We
will say that an iterated function system is \textit{open} when all of the
maps that it comprises are open.

\section{\label{topsec}Fractal transformations}

Here we present a generalized theory of fractal transformations and establish
some continuity properties. Fractal transformations are defined using sections
of coding maps. We are concerned with continuity properties; for example,
Theorem \ref{ctyfractaltransthm} (i) provides a sufficient condition for a
fractal transformation to be continuous.

\begin{definition}
\label{branchdef}Let $\pi:I^{\infty}\rightarrow A$ be the coding map of
$\mathcal{F}$. A subset $\Omega\subset I^{\infty}$ is called an
\textbf{address space }for $\mathcal{F}$ if $\pi(\Omega)=A$ and $\pi|_{\Omega
}:\Omega\rightarrow A\subset X$ is one-to-one. The corresponding map
\[
\tau:A\rightarrow\Omega,x\mapsto(\pi|_{\Omega})^{-1}(x)\text{,}%
\]
is called a \textbf{section }of\textbf{\ }$\pi$.
\end{definition}

Theorem \ref{sectionthm} summarises the properties of sections of $\pi.$

\begin{theorem}
\label{sectionthm}Let $\mathcal{F}=(X;f_{1},f_{2},...,f_{N})$ be a
point-fibred iterated function system on a compact Hausdorff space, with
attractor $A$, code space $I^{\infty}$, and coding map $\pi:I^{\infty
}\rightarrow A.$ If $\tau:A\rightarrow\Omega$ is a section of $\pi$ then

(i) $\tau:A\rightarrow\Omega$ is bijective;

(ii) $\tau^{-1}:\Omega\rightarrow$ $A$ is continuous;

(iii) $\pi\circ\tau=\iota_{A}$, the identity map on $A,$ and $\tau\circ\left(
\pi|_{\Omega}\right)  =\iota_{\Omega}$, the identity map on $\Omega$;

(iv) if $\mathcal{F}$ is injective and $f_{i}(A)\cap f_{j}(A)=\emptyset$ for
all$\ i,j\in I$ with $i\neq j$, then $\Omega=I^{\infty}$;

(v) if $\Omega$ is closed then $\tau:A\rightarrow\Omega$ is a homeomorphism;

(vi) if $A$ is connected and $A$ is not a singleton, then $\tau:A\rightarrow
\Omega$ is not continuous.
\end{theorem}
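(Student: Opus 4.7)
The plan is to dispatch parts (i)--(iii) essentially from the definitions together with Theorem \ref{kieningerthm}, then to handle (iv) by an induction on coordinates exploiting the disjointness hypothesis, and finally to use compactness for (v) and total disconnectedness of $I^\infty$ for (vi).

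For (i), since $\Omega$ is chosen so that $\pi|_\Omega : \Omega \to A$ is one-to-one and onto (by $\pi(\Omega)=A$), it is a bijection, and $\tau=(\pi|_\Omega)^{-1}$ inherits bijectivity. Part (iii) is then just the statement that $\tau$ and $\pi|_\Omega$ are mutual inverses. For (ii), observe that $\tau^{-1}=\pi|_\Omega$ is the restriction to $\Omega$ of the coding map $\pi:I^\infty\to A$, which is continuous by Theorem \ref{kieningerthm}; restriction preserves continuity.

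For (iv), I would show that under the stated hypotheses $\pi$ itself is injective, forcing $\Omega=I^\infty$. Suppose $\pi(\sigma)=\pi(\sigma')$. From the characterization $\pi(\sigma)=\lim_k f_{\sigma|k}(a)$ one gets the standard self-similarity $\pi(\sigma)=f_{\sigma_1}(\pi(S\sigma))$, where $S$ is the shift; hence $\pi(\sigma)\in f_{\sigma_1}(A)$ and $\pi(\sigma')\in f_{\sigma'_1}(A)$. The disjointness assumption $f_i(A)\cap f_j(A)=\emptyset$ for $i\neq j$ forces $\sigma_1=\sigma'_1$. Injectivity of $f_{\sigma_1}$ then yields $\pi(S\sigma)=\pi(S\sigma')$, and an induction on the coordinates (or the fact that the conclusion holds coordinatewise) gives $\sigma=\sigma'$. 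Since $\pi|_{I^\infty}$ is already a bijection onto $A$, any address space $\Omega$ must coincide with $I^\infty$.

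For (v), note that $I^\infty$, being a countable product of the finite discrete space $I$, is compact Hausdorff; if $\Omega$ is closed it is therefore compact. The continuous bijection $\tau^{-1}=\pi|_\Omega$ from the compact space $\Omega$ to the Hausdorff space $A$ is then automatically a homeomorphism, so its inverse $\tau$ is continuous as well. For (vi), the space $I^\infty$ is totally disconnected (for any two distinct sequences one finds a clopen cylinder separating them), so every connected subset is a singleton. If $\tau$ were continuous, then $\tau(A)=\Omega$ would be a connected subset of $I^\infty$, hence a singleton; combined with the bijectivity from (i) this forces $A$ itself to be a singleton, contradicting the hypothesis.

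The only step requiring care is (iv): the semi-conjugacy $\pi\circ S = f_{\sigma_1}^{-1}\circ \pi$ on the appropriate fiber must be used correctly to propagate disjointness down the coordinates, and one must verify that $f_{\sigma_1}(A)$ really contains $\pi(\sigma)$, which follows from $f_{\sigma_1}(A)=f_{\sigma_1}(\pi(I^\infty))$ together with the continuity of $f_{\sigma_1}$ and the self-similarity of $A$ from Theorem \ref{mainIFSthm}. Everything else is formal.
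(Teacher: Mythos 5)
Your proposal is correct and follows essentially the same route as the paper: (i)--(iii) straight from the definition of an address space and the continuity of $\pi$ (Theorem \ref{kieningerthm}), (v) via a continuous bijection from a compact space to a Hausdorff space, (vi) via total disconnectedness of $I^{\infty}$, and (iv) by using injectivity of the maps to propagate the disjointness of the sets $f_{i}(A)$ through the coordinates. The only cosmetic difference is in (iv), where you prove injectivity of $\pi$ by forward induction along the shift using $\pi(\sigma)=f_{\sigma_{1}}(\pi(S\sigma))$, whereas the paper argues by contradiction at the first index of disagreement via $\pi(\sigma)\in f_{\sigma|K+1}(A)$; the two arguments are the same in substance.
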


\begin{proof}
(i) By Definition \ref{branchdef} $\pi|_{\Omega}:\Omega\rightarrow A$ is
bijective, so $\tau=$ $(\pi|_{\Omega})^{-1}:A\rightarrow\Omega$ is bijective.

(ii) By Theorem \ref{kieningerthm} $\pi:I^{\infty}\rightarrow A$ is
continuous. It follows that $\tau^{-1}=\pi|_{\Omega}:\Omega\rightarrow A$ is continuous.

(iii) If $x\in A$ then $\pi\circ\tau(x)=\pi\circ\left(  \pi|_{\Omega}\right)
^{-1}(x)\subset\pi\circ\pi^{-1}(x)=x=i_{A}(x).$ Also $\tau\circ\pi|_{\Omega
}=\tau\circ\tau^{-1}=\iota_{\Omega}$.

(iv) Suppose $\Omega\neq I^{\infty}$. Then there are $\sigma,\omega\in
I^{\infty}$, $\sigma\neq\omega$, such that $\pi(\sigma)=\pi(\omega)$. We show
that this is impossible. If $\pi(\sigma)=\pi(\omega)$ then Theorem
\ref{mainIFSthm} (v) implies $\left\{  \pi(\sigma)\right\}  =\lim
_{k\rightarrow\infty}f_{\sigma|k}(A)=\lim_{k\rightarrow\infty}f_{\omega
|k}(A)=\left\{  \pi(\omega)\right\}  $. Let $K$ be the least integer such that
$\omega_{K+1}\neq\sigma_{K+1}$. Then $f_{\omega|K}=f_{\sigma|K}$ and
$f_{\omega_{K+1}}(A)\cap f_{\sigma_{K+1}}(A)=\emptyset$. Since $\mathcal{F}$
is injective, each $f\in\mathcal{F}$ is injective, which implies that
$f_{\omega|K}:X\rightarrow X$ is injective. Since $f_{\omega|K}=f_{\sigma|K}$,
it now follows that $f_{\omega|K+1}(A)\cap f_{\sigma|K+1}(A)=f_{\omega
|K}(f_{\omega_{K+1}}\left(  A\right)  )\cap f_{\sigma|K}(f_{\sigma_{K+1}%
}\left(  A\right)  )=f_{\omega|K}(f_{\sigma_{K+1}}\left(  A\right)  )\cap
f_{\omega|K}(f_{\omega_{K+1}}\left(  A\right)  )=\emptyset$. Since $\left\{
\pi(\sigma)\right\}  \subset f_{\sigma|K+1}(A)$ and $\left\{  \pi
(\omega)\right\}  \subset f_{\omega|K+1}(A)$ it now follows that $\left\{
\pi(\sigma)\right\}  \cap\left\{  \pi(\omega)\right\}  =\emptyset$.

(v) If $\Omega\subset I^{\infty}$ is closed then it is compact, because
$I^{\infty}$ is compact. It follows that $\tau^{-1}=\pi|_{\Omega}%
:\Omega\rightarrow A$ is a continuous bijective mapping from a compact space
$\Omega$ onto a Hausdorff space $A$. By \cite[Theorem 5.6, p.167]{munkres} it
follows that $\tau^{-1}:\Omega\rightarrow A$ is a homeomorphism. It follows
that $\tau:A\rightarrow\Omega$ is a homeomorphism.

(vi) Suppose that $A$ is connected and $A$ is not a singleton. It follows that
$\Omega$ is not a singleton. It follows that $\Omega$ is not connected. (Since
$\Omega$ contains more than one point and is a subset of $I^{\infty},$ which
is totally disconnected when it contains more than one point, it follows that
$\Omega$ is not connected.)

Now suppose $\tau:A\rightarrow\Omega$ is continuous. Then $\tau$ is a
homeomorphism. It follows that $A$ is not connected. But $A$ is connected. So
$\tau:A\rightarrow\Omega$ is not continuous.
\end{proof}

In Definition \ref{ftransdef} we define a type of transformation between
attractors of iterated function systems by composing sections of coding maps
with coding maps. We call these transformations "fractal" because they can be
very rough; specific examples demonstrate that the graphs of these
transformations, between compact manifolds, can possess a non-integer
Hausdorff-Besicovitch dimension.

\begin{definition}
\label{ftransdef}Let $\mathcal{F}=(X;f_{1},f_{2},...,f_{N})$ be a point-fibred
iterated function system over a compact Hausdorff space $X$. Let
$A_{\mathcal{F}}\subset X$ be the attractor of $\mathcal{F}$. Let
$\pi_{\mathcal{F}}:I^{\infty}\rightarrow A_{\mathcal{F}}$ be the coding map of
$\mathcal{F}$. Let $\tau_{\mathcal{F}}:A_{\mathcal{F}}\rightarrow
\Omega_{\mathcal{F}}\subset I^{\infty}$ be a branch of $\pi_{\mathcal{F}}$.
Let $\mathcal{G}=(Y;g_{1},g_{2},...,g_{N})$ be a point-fibred iterated
function system over a compact Hausdorff space $Y.$ Let $A_{\mathcal{G}}$ be
the attractor of $\mathcal{G}$. Let $\pi_{\mathcal{G}}:I^{\infty}\rightarrow
A_{\mathcal{G}}$ be the coding map of $\mathcal{G}$. The corresponding
\textbf{fractal transformation} is defined to be%
\[
T_{\mathcal{FG}}:A_{\mathcal{F}}\rightarrow A_{\mathcal{G}}\text{,
}x\longmapsto\pi_{\mathcal{G}}\circ\tau_{\mathcal{F}}(x)\text{.}%
\]

\end{definition}

In Theorem \ref{ctyfractaltransthm} we describe some continuity properties of
fractal transformations. These properties make fractal transformations
interesting for applications to digital imaging.

\begin{theorem}
\label{ctyfractaltransthm}Let $\mathcal{F}$ and $\mathcal{G}$ be point-fibred
iterated function systems as in Definition \ref{ftransdef}. Let
$T_{\mathcal{FG}}=\pi_{\mathcal{G}}\circ\tau_{\mathcal{F}}:A_{\mathcal{F}%
}\rightarrow A_{\mathcal{G}}$ be the corresponding fractal transformation.

(i) If, whenever $\sigma,\omega\in\overline{\Omega_{\mathcal{F}}}$,
$\pi_{\mathcal{F}}(\sigma)=\pi_{\mathcal{F}}(\omega)$ $\Rightarrow$
$\pi_{\mathcal{G}}(\sigma)=\pi_{\mathcal{G}}(\omega),$ then $T_{\mathcal{FG}%
}:A_{\mathcal{F}}\rightarrow A_{\mathcal{G}}$ is continuous.

(ii) If $\Omega_{\mathcal{G}}:=\Omega_{\mathcal{F}}$ is an address space for
$\mathcal{G}$, and if, whenever $\sigma,\omega\in\overline{\Omega
_{\mathcal{F}}}$, $\pi_{\mathcal{F}}(\sigma)=\pi_{\mathcal{F}}(\omega)$
$\Longleftrightarrow$ $\pi_{\mathcal{G}}(\sigma)=\pi_{\mathcal{G}}(\omega),$
then $T_{\mathcal{FG}}$ is a homeomorphism and $T_{\mathcal{FG}}%
^{-1}=T_{\mathcal{GF}}$.
\end{theorem}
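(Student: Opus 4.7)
The main difficulty is that $\tau_{\mathcal{F}}$ is typically discontinuous (this is precisely Theorem \ref{sectionthm}(vi) when $A_{\mathcal{F}}$ is connected), so one cannot simply compose continuous maps to obtain the continuity of $T_{\mathcal{FG}}=\pi_{\mathcal{G}}\circ\tau_{\mathcal{F}}$. Instead I would argue by nets, exploiting the compactness of $I^{\infty}$ and the continuity of the coding maps guaranteed by Theorem \ref{kieningerthm}. For (i), let $x\in A_{\mathcal{F}}$ and suppose $(x_{\alpha})$ is a net in $A_{\mathcal{F}}$ converging to $x$. Set $\sigma:=\tau_{\mathcal{F}}(x)$ and $\sigma_{\alpha}:=\tau_{\mathcal{F}}(x_{\alpha})$, so that $\sigma,\sigma_{\alpha}\in\Omega_{\mathcal{F}}\subset\overline{\Omega_{\mathcal{F}}}$.

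To conclude $T_{\mathcal{FG}}(x_{\alpha})=\pi_{\mathcal{G}}(\sigma_{\alpha})\to\pi_{\mathcal{G}}(\sigma)=T_{\mathcal{FG}}(x)$, I would show that every subnet has a further subnet with this limit; since $A_{\mathcal{G}}$ is Hausdorff, this suffices. Any subnet of $(\sigma_{\alpha})$ lies in the compact space $I^{\infty}$ and hence has a convergent sub-subnet $\sigma_{\alpha_{\beta}}\to\omega\in\overline{\Omega_{\mathcal{F}}}$. Continuity of $\pi_{\mathcal{F}}$ yields $\pi_{\mathcal{F}}(\omega)=\lim\pi_{\mathcal{F}}(\sigma_{\alpha_{\beta}})=\lim x_{\alpha_{\beta}}=x=\pi_{\mathcal{F}}(\sigma)$. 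The hypothesis of (i), applied to $\sigma,\omega\in\overline{\Omega_{\mathcal{F}}}$, now gives $\pi_{\mathcal{G}}(\sigma)=\pi_{\mathcal{G}}(\omega)$, and continuity of $\pi_{\mathcal{G}}$ delivers $\pi_{\mathcal{G}}(\sigma_{\alpha_{\beta}})\to\pi_{\mathcal{G}}(\omega)=\pi_{\mathcal{G}}(\sigma)$, as required.

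For (ii), part (i) applied to the forward implication gives continuity of $T_{\mathcal{FG}}$, and part (i) applied with the roles of $\mathcal{F}$ and $\mathcal{G}$ interchanged (noting that $\overline{\Omega_{\mathcal{G}}}=\overline{\Omega_{\mathcal{F}}}$ by assumption) together with the reverse implication gives continuity of $T_{\mathcal{GF}}$. It remains to verify that $T_{\mathcal{GF}}$ is a two-sided inverse of $T_{\mathcal{FG}}$: given $x\in A_{\mathcal{F}}$ and $\sigma:=\tau_{\mathcal{F}}(x)\in\Omega_{\mathcal{F}}=\Omega_{\mathcal{G}}$, Theorem \ref{sectionthm}(iii) yields $\tau_{\mathcal{G}}(\pi_{\mathcal{G}}(\sigma))=\sigma$, so $T_{\mathcal{GF}}(T_{\mathcal{FG}}(x))=\pi_{\mathcal{F}}(\sigma)=x$, and the other composition is symmetric. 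The main subtlety throughout is that the hypothesis is stated on the closure $\overline{\Omega_{\mathcal{F}}}$ rather than on $\Omega_{\mathcal{F}}$ itself; this is exactly what is needed for the net argument, since the limit points of nets drawn from $\Omega_{\mathcal{F}}$ only land in $\overline{\Omega_{\mathcal{F}}}$, and it is precisely at those limits that the $\pi_{\mathcal{F}}$-fibres must be forced to respect the $\pi_{\mathcal{G}}$-fibres.
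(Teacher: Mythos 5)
Your proposal is correct, and your part (ii) is essentially the paper's argument: continuity of $T_{\mathcal{FG}}$ and $T_{\mathcal{GF}}$ from part (i) applied in both directions, plus the identities of Theorem \ref{sectionthm}(iii) and the fact that $\tau_{\mathcal{G}}$ has range $\Omega_{\mathcal{G}}=\Omega_{\mathcal{F}}$ to check the two compositions are identities. For part (i), however, you take a genuinely different route. The paper first proves the map identity $\pi_{\mathcal{G}}\circ\tau_{\mathcal{F}}\circ\pi_{\mathcal{F}}|_{\overline{\Omega_{\mathcal{F}}}}=\pi_{\mathcal{G}}|_{\overline{\Omega_{\mathcal{F}}}}$ (for $\sigma\in\overline{\Omega_{\mathcal{F}}}$ choose $\omega\in\Omega_{\mathcal{F}}$ with the same $\pi_{\mathcal{F}}$-image; the hypothesis then identifies the $\pi_{\mathcal{G}}$-images), so that $T_{\mathcal{FG}}\circ\pi_{\mathcal{F}}|_{\overline{\Omega_{\mathcal{F}}}}$ is continuous, and then invokes the cited topological fact (Mendelson) that if $g\circ f$ is continuous and $f$ is a continuous surjection from a compact space onto a Hausdorff space, then $g$ is continuous, applied with $f=\pi_{\mathcal{F}}|_{\overline{\Omega_{\mathcal{F}}}}$ and $g=T_{\mathcal{FG}}$. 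You instead argue pointwise with nets, extracting from any subnet of the addresses $\tau_{\mathcal{F}}(x_{\alpha})$ a sub-subnet converging in the compact space $I^{\infty}$ to some $\omega\in\overline{\Omega_{\mathcal{F}}}$ and applying the hypothesis to the pair $(\tau_{\mathcal{F}}(x),\omega)$. Both arguments rest on exactly the same two ingredients, compactness of the code space and the identification hypothesis on the closure; your subnet extraction is in effect an unpacking of the compactness argument hidden in the quotient-map criterion the paper quotes. The paper's version is shorter because it delegates that bookkeeping to a cited theorem, while yours is self-contained and makes especially transparent why the hypothesis must be imposed on $\overline{\Omega_{\mathcal{F}}}$ rather than on $\Omega_{\mathcal{F}}$. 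One cosmetic remark: Hausdorffness of $A_{\mathcal{G}}$ is not needed for the criterion that a net converges to a point if every subnet has a further subnet converging to that point (this holds in any topological space); where Hausdorffness is genuinely used is in $A_{\mathcal{F}}$, to get $\pi_{\mathcal{F}}(\omega)=x$ from uniqueness of net limits.
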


\begin{proof}
(i) Assume that, whenever $\sigma,\omega\in\overline{\Omega_{\mathcal{F}}}$,
$\pi_{\mathcal{F}}(\sigma)=\pi_{\mathcal{F}}(\omega)$ $\Rightarrow$
$\pi_{\mathcal{G}}(\sigma)=\pi_{\mathcal{G}}(\omega)$. We begin by showing
that $\pi_{\mathcal{G}}\circ\tau_{\mathcal{F}}\circ\pi_{\mathcal{F}%
}|_{\overline{\Omega_{\mathcal{F}}}}:\overline{\Omega_{\mathcal{F}}%
}\rightarrow A_{\mathcal{G}}$ is the same as $\pi_{\mathcal{G}}|_{\overline
{\Omega_{\mathcal{F}}}}:\overline{\Omega_{\mathcal{F}}}\rightarrow
A_{\mathcal{G}}.$ Let $\sigma\in\overline{\Omega_{\mathcal{F}}}$. Then there
is $\omega\in\Omega_{\mathcal{F}}$ such that $\pi_{\mathcal{F}}(\sigma
)=\pi_{\mathcal{F}}(\omega)$ because $\Omega_{\mathcal{F}}$ is a code space
for $\mathcal{F}.$ Hence $\tau_{\mathcal{F}}\circ\pi_{\mathcal{F}}%
|_{\overline{\Omega_{\mathcal{F}}}}(\sigma)=\tau_{\mathcal{F}}\circ
\pi_{\mathcal{F}}|_{\Omega_{\mathcal{F}}}(\omega)=\omega$ (using Theorem
\ref{sectionthm} (iii)). Hence $\pi_{\mathcal{G}}\circ\tau_{\mathcal{F}}%
\circ\pi_{\mathcal{F}}|_{\overline{\Omega_{\mathcal{F}}}}(\sigma
)=\pi_{\mathcal{G}}(\omega)=\pi_{\mathcal{G}}(\sigma)$, where we have used our
initial assumption. It follows that $\pi_{\mathcal{G}}\circ\tau_{\mathcal{F}%
}\circ\pi_{\mathcal{F}}|_{\overline{\Omega_{\mathcal{F}}}}=\pi_{\mathcal{G}%
}|_{\overline{\Omega_{\mathcal{F}}}}$. It follows that $\pi_{\mathcal{G}}%
\circ\tau_{\mathcal{F}}\circ\pi_{\mathcal{F}}|_{\overline{\Omega_{\mathcal{F}%
}}}$ is a continuous map from a compact space $\overline{\Omega_{\mathcal{F}}%
}$ to a compact Hausdorff space $A_{\mathcal{G}}.$ But $\pi_{\mathcal{G}}%
\circ\tau_{\mathcal{F}}\circ\pi_{\mathcal{F}}|_{\overline{\Omega_{\mathcal{F}%
}}}=\left(  \pi_{\mathcal{G}}\circ\tau_{\mathcal{F}}\right)  \circ
\pi_{\mathcal{F}}|_{\overline{\Omega_{\mathcal{F}}}}$ is the composition of a
continuous mapping $\pi_{\mathcal{F}}|_{\overline{\Omega_{\mathcal{F}}}%
}:\overline{\Omega_{\mathcal{F}}}\rightarrow A_{\mathcal{F}},$ from a compact
Hausdorff space $\overline{\Omega_{\mathcal{F}}}$ onto a Hausdorff space
$A_{\mathcal{F}}$, with a mapping $\pi_{\mathcal{G}}\circ\tau_{\mathcal{F}%
}:A_{\mathcal{F}}\rightarrow A_{\mathcal{G}}$ from $A_{\mathcal{F}}$ into a
Hausdorff space $A_{\mathcal{G}}$. It follows by a well-known theorem in
topology, see for example \cite[Proposition 7.4, p. 195]{mendelson}$,$ that
$T_{\mathcal{FG}}=$ $\pi_{\mathcal{G}}\circ\tau_{\mathcal{F}}:A_{\mathcal{F}%
}\rightarrow A_{\mathcal{G}}$ is continuous.

(ii) Assume that $\Omega_{\mathcal{G}}=\Omega_{\mathcal{F}}$ is an address
space for $\mathcal{G}$, and that, whenever $\sigma,\omega\in\overline
{\Omega_{\mathcal{F}}}$, $\pi_{\mathcal{F}}(\sigma)=\pi_{\mathcal{F}}(\omega)$
$\Longleftrightarrow$ $\pi_{\mathcal{G}}(\sigma)=\pi_{\mathcal{G}}(\omega)$.
Then by (i) both of the mappings $T_{\mathcal{FG}}=\pi_{\mathcal{G}}\circ
\tau_{\mathcal{F}}:A_{\mathcal{F}}\rightarrow A_{\mathcal{G}}$ and
$T_{\mathcal{GF}}=\pi_{\mathcal{F}}\circ\tau_{\mathcal{G}}:A_{\mathcal{G}%
}\rightarrow A_{\mathcal{F}}$ are continuous. Using the fact that the range of
$\tau_{\mathcal{G}}$ is $\Omega_{\mathcal{F}}$ it is readily checked that
$T_{\mathcal{FG}}\circ T_{\mathcal{GF}}=\pi_{\mathcal{G}}\circ\tau
_{\mathcal{F}}\circ$ $\pi_{\mathcal{F}}\circ\tau_{\mathcal{G}}%
=i_{A_{\mathcal{G}}}$ and $T_{\mathcal{GF}}\circ T_{\mathcal{FG}}%
=\pi_{\mathcal{F}}\circ\tau_{\mathcal{G}}\circ$ $\pi_{\mathcal{G}}\circ
\tau_{\mathcal{F}}=i_{A_{\mathcal{F}}}.$ Hence $T_{\mathcal{FG}}$ is a
homeomorphism and $T_{\mathcal{GF}}=\left(  T_{\mathcal{FG}}\right)  ^{-1}$.
\end{proof}

\begin{remark}
Suppose that the equivalence relations $\sim _{\mathcal{F}}$ and $\sim _{%
\mathcal{G}}$on $I^{\infty }$, induced by $\pi _{\mathcal{F}}:I^{\infty
}\rightarrow A_{\mathcal{G}}$ and $\pi _{\mathcal{G}}:I^{\infty }\rightarrow
A_{\mathcal{G}}$ respectively, are the same. Then it is well known that,
using compactness, the quotient topological space $I^{\infty }/\sim _{%
\mathcal{F}}=I^{\infty }/\sim _{\mathcal{G}}$ is homeomorphic to both $A_{%
\mathcal{F}}$ and $A_{\mathcal{G}}$.
\end{remark}

See \cite{bandt} for discussion of relationships between the topology of an
attractor and the equivalence class structure induced by a coding map.

\section{\label{masksec}Construction of sections of $\pi$}

In order to construct a fractal transformation we need to specify a section of
$\pi$. In order to construct a section of $\pi$ we have to construct an
address space $\Omega$; that is, we need to specify one element from each of
the sets in the collection $\{\pi^{-1}(x):x\in A\}$. To do this in a general
way seems to be difficult; for example, if $f_{1}(A)\cap f_{2}(A)$ contains a
nonempty open set $O$, then $\pi^{-1}(x)$ is non-denumerable for all $x\in O$.
However there are two particular related methods. These methods yield
interesting structure; for example, in both cases the resulting address space
$\Omega\subset I^{\infty}$ is mapped into itself by the shift operator, see
Theorem \ref{maskbranchthm} (ii). They are as follows.

(a) (Masked iterated function system method.) This method requires that
$\mathcal{F}$ is injective. Define a dynamical system on $T:A_{\mathcal{F}%
}\rightarrow A_{\mathcal{F}}$ with the aid of inverses of the functions in
$\mathcal{F}$. Follow orbits of $T$ to define $\tau_{\mathcal{F}%
}:A_{\mathcal{F}}\rightarrow\Omega_{\mathcal{F}}$; in effect one uses a Markov
partition associated with $T$ to define $\Omega_{\mathcal{F}}$.

(b) (Fractal tops method.) Use the dictionary order relation on $I^{\infty}$
to select a unique element of $\pi^{-1}(x)$ for each $x\in A_{\mathcal{F}}$.
This method applies when $\mathcal{F}$ is not required to be injective. When
$\mathcal{F}$ is injective, it is a special case of (a). To date, the fractal
tops method seems to be the easiest to convert to computational algorithms and applications.

\subsection{(a) The masked iterated function system method}

Definition \ref{maskdef} introduces a special partition of an attractor.

\begin{definition}
\label{maskdef}Let $\mathcal{F}$ be a point-fibred iterated function system on
a compact Hausdorff space. Let $A$ be the attractor of $\mathcal{F}$. A finite
sequence of sets $\mathcal{M}:=\{M_{i}\subset A|i\in I\}$ is called a
\textbf{mask} for $\mathcal{F}$ if

\begin{enumerate}
\item $M_{i}\subseteq f_{i}(A)$, $i\in I$;

\item $M_{i}\cap M_{j}=\emptyset$, $i,j\in I$, $i\neq j$;

\item $\cup_{i\in I}M_{i}=A$.
\end{enumerate}
\end{definition}

Note that for any $x\in A$ there exists a unique $i\in I$ such that $x\in
M_{i}\subseteq f_{i}(A)$. This enable us, in Definition \ref{dynsysdef}, to
define an associated dynamical system on the attractor.

\begin{definition}
\label{dynsysdef} Let $\left\{  M_{i}:i\in I\right\}  $ be a mask for an
injective point-fibred iterated function system $\mathcal{F}$ with attractor
$A.$ The associated \textbf{masked dynamical system} for $\mathcal{F}$ is
\[
T:A\rightarrow A,\quad x\mapsto%
\begin{cases}
f_{1}^{-1}(x), & x\in M_{1},\\
f_{2}^{-1}(x), & x\in M_{2},\\
\qquad\vdots & \\
f_{N}^{-1}(x), & x\in M_{N}.
\end{cases}
\]

\end{definition}

Theorem \ref{maskthm} associates a unique section of $\pi$ with a masked
dynamical system.

\begin{theorem}
\label{maskthm}Let $\mathcal{F}$ be an injective point-fibred iterated
function system with attractor $A.$ Let $T:A\rightarrow A$ be a masked
dynamical system for $\mathcal{F}$, associated with mask $\mathcal{M}=\left\{
M_{i}:i\in I\right\}  $. Let $x\in A$ and let $\left\{  x_{n}\right\}
_{n=0}^{\infty}$ be the orbit of $x$ under $T$; that is, $x_{0}=x$ and
$x_{n}=T^{n}(x_{0})$ for $n=1,2,...$. Let $\sigma_{k}(x)\in I$ be the unique
symbol such that $x_{k-1}\in M_{\sigma_{k}}$, for $k=1,2,3,...$ . Then
\[
\Omega_{\mathcal{M}}=\{\sigma\in I^{\infty}|\sigma:=\sigma(x)=\sigma
_{1}(x)\sigma_{2}(x)\sigma_{3}(x)...\in I^{\infty},x\in A\}
\]
is an address space for $\mathcal{F}$.
\end{theorem}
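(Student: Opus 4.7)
The plan is to verify the two defining properties of an address space, namely that $\pi(\Omega_{\mathcal{M}})=A$ and that $\pi|_{\Omega_{\mathcal{M}}}$ is injective. Both reduce to a single identity: for every $x\in A$, the sequence $\sigma(x)$ produced by following the $T$-orbit satisfies $\pi(\sigma(x))=x$. Once this identity is established, surjectivity of $\pi|_{\Omega_{\mathcal{M}}}$ is immediate, and injectivity follows because $\sigma(x_1)=\sigma(x_2)$ forces $x_1=\pi(\sigma(x_1))=\pi(\sigma(x_2))=x_2$.

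First I would check that $T$ is well-defined as a map $A\to A$. Since $\{M_i\}_{i\in I}$ partitions $A$, every $x\in A$ lies in exactly one $M_{i}$, and since $M_i\subseteq f_i(A)$ together with injectivity of each $f_i$, the element $f_i^{-1}(x)$ is uniquely defined and lies in $A$. Consequently the orbit $\{x_k\}_{k\ge 0}$ is well-defined, and so is the symbol sequence $\sigma(x)=\sigma_1(x)\sigma_2(x)\ldots\in I^{\infty}$.

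The core step is to establish, by induction on $k$, the identity
\[
x \;=\; f_{\sigma_1}\circ f_{\sigma_2}\circ\cdots\circ f_{\sigma_k}(x_k).
\]
The base case $k=1$ follows from $x_0\in M_{\sigma_1}\subseteq f_{\sigma_1}(A)$ together with $x_1=T(x_0)=f_{\sigma_1}^{-1}(x_0)$, whence $x_0=f_{\sigma_1}(x_1)$. The inductive step is the same observation applied at stage $k$: $x_{k}\in M_{\sigma_{k+1}}$, so $x_k=f_{\sigma_{k+1}}(x_{k+1})$, and substitution gives the next identity. Since $x_k\in A$, it follows that $x\in f_{\sigma|k}(A)$ for every $k$, so
\[
x\in\bigcap_{k=1}^{\infty}f_{\sigma|k}(A)=\Pi(\sigma)=\{\pi(\sigma)\},
\]
by the definition of $\Pi$ in \eqref{codemapeq} and the point-fibred hypothesis. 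Thus $\pi(\sigma(x))=x$.

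With the identity in hand, the rest is routine: $\pi(\Omega_{\mathcal{M}})\supseteq\{\pi(\sigma(x)):x\in A\}=A$, and the reverse inclusion is trivial since $\Omega_{\mathcal{M}}\subseteq I^{\infty}$ and $\pi(I^{\infty})=A$ by Theorem \ref{mainIFSthm}(iii). Injectivity of $\pi|_{\Omega_{\mathcal{M}}}$ follows from the argument noted above. I expect the main subtlety to be simply the careful bookkeeping of indices in the inductive identity and checking that the partition property of the mask, together with injectivity of $\mathcal{F}$, is exactly what is needed to make every step of the construction unambiguous; no topological or limiting argument beyond the defining property of $\pi$ is required.
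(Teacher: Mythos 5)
Your proposal is correct and takes essentially the same route as the paper: both proofs hinge on the identity $\pi(\sigma(x))=x$, obtained by inducting along the $T$-orbit to show $x\in f_{\sigma_{1}}\circ\cdots\circ f_{\sigma_{k}}(A)$ for every $k$ (the paper runs this induction backwards from a fixed $K$, while your forward formulation $x=f_{\sigma_{1}}\circ\cdots\circ f_{\sigma_{k}}(x_{k})$ is the same fact with cleaner bookkeeping), and then deduce that $\pi|_{\Omega_{\mathcal{M}}}$ is onto and one-to-one. The only cosmetic difference is the injectivity step, where the paper invokes disjointness of the mask sets to get $x\neq y$ from $\sigma(x)\neq\sigma(y)$, whereas you read it off directly from the identity; both arguments are valid.
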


\begin{proof}
Let $x\in A$. We begin by proving that, for all $K\geq1$ and $j=1,2,...,K-1, $
we have%
\begin{equation}
x_{K-j}\in f_{\sigma_{K-j+1}(x)}f_{\sigma_{K-j+1}(x)}\circ...\circ
f_{\sigma_{K}(x)}(A). \label{inductioneq}%
\end{equation}
Fix $K.$ We use induction on $j.$ Since $x_{K-1}\in M_{\sigma_{K}}$ it follows
that $x_{K}=f_{\sigma_{K}}^{-1}(x_{K-1})$ so $x_{K-1}=f_{\sigma_{K}(x)}%
(x_{K})\in f_{\sigma_{K}}(A)$. It follows that equation \ref{inductioneq} is
true for $j=1$. Suppose that equation \ref{inductioneq} is true for
$j=1,2,...,J\leq K-2$. It follows that $x_{K-J}\in f_{\sigma_{K-J+1}(x)}\circ
f_{\sigma_{K-J+1}(x)}\circ...\circ f_{\sigma_{K}(x)}(A)$. We also have
$x_{K-J}\in M_{\sigma_{K-J+1}}$ so $x_{K-J}=f_{\sigma_{K-J}}^{-1}(x_{K-J-1})$
which implies $x_{K-J-1}=f_{\sigma_{K-J}}(x_{K-J})\in f_{\sigma_{K-J(x)}%
}\left(  f_{\sigma_{K-J+1}(x)}\circ f_{\sigma_{K-J+1}(x)}\circ...\circ
f_{\sigma_{K}(x)}(A)\right)  .$ Hence equation \ref{inductioneq} is true for
$j=J+1$. This completes the induction on $j$.

It follows that
\[
x=x_{0}\in f_{\sigma_{1}(x)}\circ f_{\sigma_{2}(x)}\circ...\circ f_{\sigma
_{K}(x)}(A)
\]
for all $K.$ It follows that $x\in\cap_{k=1}^{\infty}f_{\sigma_{1}(x)}\circ
f_{\sigma_{2}(x)}\circ...\circ f_{\sigma_{k}(x)}(A)=\pi(\sigma(x))$. It
follows that $x=\pi(\sigma(x))$ for all $x\in A.$ It follows that
$\pi|_{\Omega_{\mathcal{M}}}(\Omega_{\mathcal{M}})=A$; that is, $\pi
|_{\Omega_{\mathcal{M}}}:\Omega_{\mathcal{M}}\rightarrow A$ is surjective.

To show that $\pi|_{\Omega_{\mathcal{M}}}:\Omega_{\mathcal{M}}\rightarrow A$
is injective, suppose $\sigma(x)\neq\sigma(y)\in\Omega_{\mathcal{M}}$ for some
$x,y\in A.$ Then for some $k$ we have $\sigma(x)_{k}\neq\sigma(y)_{k}$ which
implies $M_{\sigma(x)_{k}}\cap M_{\sigma(y)_{k}}=\emptyset$. Hence
$T^{k}(x)\neq T^{k}(y).$ It follows that $x\neq y$.
\end{proof}

\begin{definition}
Let $\mathcal{F}$ be an injective point-fibred iterated function system. The
address space $\Omega_{\mathcal{M}}\subset I^{\infty}$ provided by Theorem
\ref{maskthm} is called a \textbf{masked address space} for $\mathcal{F}$. The
corresponding section of $\pi$, say $\tau:A\rightarrow\Omega_{\mathcal{M}}$,
is called a \textbf{masked section} of $\pi.$
\end{definition}

Masked address spaces and masked sections have all of the properies of address
spaces and sections, such as those in Theorem \ref{sectionthm}, and associated
fractal transformations have the properties in Theorem
\ref{ctyfractaltransthm}. But these objects have additional properties that
derive from the existence and structure of the masked dynamical system. Some
of these additional properties are described in Theorem \ref{maskbranchthm}.

\begin{theorem}
\label{maskbranchthm}If $\mathcal{F}=(X;f_{1},f_{2},...,f_{N})$ is an
injective point-fibred iterated function system, with attractor $A$, code
space $I^{\infty}$, coding map $\pi:I^{\infty}\rightarrow A$, mask
$\mathcal{M}=\{M_{i}:i\in I\}$, masked address space $\Omega_{\mathcal{M}}$
and masked section $\tau:A\rightarrow\Omega_{\mathcal{M}}$, then

(i) if $\mathcal{F}$ is open then $\tau:A\rightarrow\Omega_{\mathcal{M}}$ is
continuous at $x\in A$ if and only if $T^{k-1}(x)\in Int_{A}(M_{\tau(x)_{k}})$
for all $k=1,2,...$ ;

(ii) the shift map
\[
S:\Omega_{\mathcal{M}}\rightarrow\Omega_{\mathcal{M}},\sigma_{1}\sigma
_{2}\sigma_{3}...\mapsto\sigma_{2}\sigma_{3}\sigma_{4}...
\]
is well-defined, with $S(\Omega_{\mathcal{M}})\subset\Omega_{\mathcal{M}}$;

(iii) the following diagram commutes
\begin{equation}%
\begin{array}
[c]{ccc}%
A & \overset{T}{\rightarrow} & A\\
\tau\downarrow\text{\ \ \ \ } &  & \text{ \ \ \ }\downarrow\tau\\
\Omega_{\mathcal{M}} & \overset{S}{\rightarrow} & \Omega_{\mathcal{M}}%
\end{array}
\label{commutediagram}%
\end{equation}

(iv) if there is $i\in I$ such that $M_{i}=f_{i}(A)$ then $S(\Omega
_{\mathcal{M}})=\Omega_{\mathcal{M}}.$
\end{theorem}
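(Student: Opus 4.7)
The plan is to dispatch (iii), (ii), and (iv) quickly by unwinding Definition~\ref{dynsysdef} and Theorem~\ref{maskthm}, and then to concentrate the effort on (i), where the openness hypothesis on $\mathcal{F}$ is essential.

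For (iii), I fix $x\in A$ and write $\sigma=\tau(x)$. By construction, the $k$-th coordinate of $\tau(T(x))$ is the unique index $i\in I$ with $T^{k-1}(T(x))=T^{k}(x)\in M_{i}$, and this is $\sigma_{k+1}$ by the definition of $\sigma$. Hence $\tau\circ T=S\circ\tau$. Statement (ii) follows immediately: for $\sigma=\tau(x)\in\Omega_{\mathcal{M}}$ we have $S\sigma=\tau(T(x))\in\Omega_{\mathcal{M}}$ since $T(x)\in A$. For (iv), assuming $M_{i}=f_{i}(A)$ and given any $\omega=\tau(y)\in\Omega_{\mathcal{M}}$, I set $x:=f_{i}(y)\in f_{i}(A)=M_{i}$; then $\tau(x)_{1}=i$ and $T(x)=f_{i}^{-1}(x)=y$, so by (iii) $S(\tau(x))=\tau(y)=\omega$, which proves surjectivity of $S$ on $\Omega_{\mathcal{M}}$.

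Part (i) is the substantive claim. Because $I^{\infty}$ carries the product topology, $\tau$ is continuous at $x$ if and only if for every $n$ there is a neighborhood $V$ of $x$ open in $A$ such that $T^{k-1}(y)\in M_{\sigma_{k}}$ for all $y\in V$ and all $k\le n$, where $\sigma=\tau(x)$. For the forward implication I would note that each $f_{i}$ is a continuous injection from the compact space $A$ into the Hausdorff space $X$, so $f_{i}\colon A\to f_{i}(A)$ is a homeomorphism; since $M_{\sigma_{j}}\subset f_{\sigma_{j}}(A)$, this lets me apply the continuous open inverse branches $f_{\sigma_{j}}^{-1}$ successively to $V$, so that $T^{k-1}(V)$ is an open neighborhood of $x_{k-1}$ in $A$ contained in $M_{\sigma_{k}}$, forcing $x_{k-1}\in Int_{A}(M_{\sigma_{k}})$. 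Note that this direction does not yet invoke openness of $\mathcal{F}$.

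The main obstacle is the reverse implication, and it is here that openness of $\mathcal{F}$ enters. Assuming $x_{k-1}\in Int_{A}(M_{\sigma_{k}})$ for all $k$, I would build the neighborhood $V$ by pulling back inductively: set $W_{n}:=Int_{A}(M_{\sigma_{n}})$ and, for $k$ decreasing from $n$ to $1$, define $W_{k-1}:=f_{\sigma_{k-1}}(W_{k})\cap Int_{A}(M_{\sigma_{k-1}})$. The delicate step is to check that $W_{k-1}$ is open in $A$: writing $W_{k}=W_{k}'\cap A$ with $W_{k}'$ open in $X$, openness of $f_{\sigma_{k-1}}$ on $X$ makes $f_{\sigma_{k-1}}(W_{k}')$ open in $X$, and injectivity together with the inclusion $Int_{A}(M_{\sigma_{k-1}})\subset f_{\sigma_{k-1}}(A)$ allows one to identify $W_{k-1}$ with $f_{\sigma_{k-1}}(W_{k}')\cap Int_{A}(M_{\sigma_{k-1}})$, which is visibly open in $A$ (this is where the hypothesis is genuinely needed, since $f_{\sigma_{k-1}}(A)$ itself need not be open in $A$). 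On $W_{k-1}\subset M_{\sigma_{k-1}}$ the map $T$ coincides with $f_{\sigma_{k-1}}^{-1}$ and, by injectivity, sends $W_{k-1}$ into $W_{k}$. Taking $V:=W_{1}$ gives an open neighborhood of $x$ in $A$ with $T^{k-1}(V)\subset W_{k}\subset M_{\sigma_{k}}$ for $k\le n$, proving continuity of $\tau$ at $x$.
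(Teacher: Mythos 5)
Your proposal is correct, and for parts (ii), (iii), (iv) it is essentially the paper's argument: you verify $\tau\circ T=S\circ\tau$ coordinatewise from the itinerary definition, read off well-definedness of $S$ on $\Omega_{\mathcal{M}}$, and for (iv) exhibit a preimage directly by taking $x=f_{i}(y)\in M_{i}$, whereas the paper gets the same surjectivity from $T(M_{i})=A$, hence $T(A)=A$, combined with the commuting diagram --- same idea, slightly different packaging.

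The genuine difference is in (i). The paper argues with sequences: for sufficiency it shows that if $x^{(n)}\rightarrow x$ then, for each $K$, the first $K$ symbols of $\tau(x^{(n)})$ eventually agree with those of $\tau(x)$; for necessity it argues by contraposition, producing points $f_{\sigma_{1}}\circ\cdots\circ f_{\sigma_{K}}(y_{j})\rightarrow x$ whose addresses differ from $\tau(x)$ within the first $K+1$ symbols. You instead reformulate continuity at $x$ in terms of cylinder neighborhoods of $\tau(x)$ and work entirely with open sets: for the direction (continuity $\Rightarrow$ interior condition) you push a neighborhood $V$ forward under the inverse branches, using that each $f_{i}\colon A\rightarrow f_{i}(A)$ is automatically a homeomorphism (continuous injection from a compact space to a Hausdorff space), so that $T^{k-1}(V)$ is open in $A$ and contained in $M_{\sigma_{k}}$; for the converse you pull interiors back via $W_{k-1}=f_{\sigma_{k-1}}(W_{k})\cap Int_{A}(M_{\sigma_{k-1}})$ and use openness of $f_{\sigma_{k-1}}$ on $X$, together with injectivity and $Int_{A}(M_{\sigma_{k-1}})\subset f_{\sigma_{k-1}}(A)$, to keep each $W_{k}$ open in $A$. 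This buys something concrete: $A$ need not be first countable, so the paper's sequential criterion for (dis)continuity is not automatically conclusive in the general compact Hausdorff setting, while your neighborhood argument is valid verbatim there. Two small remarks: your recursion should run for $k=n$ down to $2$ (as literally written, the step $k=1$ would invoke the undefined symbol $\sigma_{0}$), with $V:=W_{1}$ unchanged; and openness of $\mathcal{F}$ is not quite as indispensable as you suggest even in that step, since $f_{\sigma_{k-1}}(W_{k})$ is open in the subspace $f_{\sigma_{k-1}}(A)$, say $f_{\sigma_{k-1}}(W_{k})=U\cap f_{\sigma_{k-1}}(A)$ with $U$ open in $X$, and intersecting with $Int_{A}(M_{\sigma_{k-1}})\subset f_{\sigma_{k-1}}(A)$ already gives $U\cap Int_{A}(M_{\sigma_{k-1}})$, a set open in $A$; but invoking the stated hypothesis as you do is of course legitimate.
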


\begin{proof}
(i) Suppose that $T^{k-1}(x)\in Int_{A}(M_{\tau(x)_{k}})$ for all $k=1,2,... $
. Let $\left\{  x^{(n)}\right\}  $ converge to $x.$ Let $\tau(x)=\sigma
=\sigma_{1}\sigma_{2}\sigma_{3}...$ and let $\tau\left(  x^{(n)}\right)
=\sigma^{\left(  n\right)  }=\sigma_{1}^{\left(  n\right)  }\sigma
_{2}^{\left(  n\right)  }\sigma_{3}^{\left(  n\right)  }...$. Since $x\in
Int_{A}(M_{\sigma_{1}})$ there is an integer $n_{1}$ such that $x^{(n)}\in
M_{\sigma_{1}}$ for all $n\geq n_{1}$. Since, for all $i\in I$, $f_{i}^{-1}$
is continuous (because $\mathcal{F}$ is invertible and open) we have $\left\{
T(x^{(n)})=f_{\sigma_{1}}(x^{\left(  n\right)  })\right\}  _{n=n_{1}}^{\infty
}$converges to $T(x)=f_{\sigma_{1}}(x)$ as $n\rightarrow\infty$. Similarly,
for any given $K$, there is $n_{K}$ such that $T^{p}(x^{(n)})\in M_{\sigma
_{p}}$ for all $p\leq K$ and all $n\geq n_{K}$. It follows that, for any given
$K$, $\sigma_{p}^{(n)}=\sigma_{p}$ for all $p\leq K$ and all $n\geq n_{K}$. It
follows that $\left\{  \sigma^{\left(  n\right)  }\right\}  _{n=1}^{\infty}$
converges to $\sigma$, i.e. $\left\{  \tau\left(  x^{(n)}\right)  \right\}
_{n=1}^{\infty}$ converges to $\tau(x). $ It follows that $\tau:A\rightarrow
\Omega_{\mathcal{M}}$ is continuous at $x$.

To prove the converse we assume that it is not true that $T^{k-1}(x)\in
Int_{A}(M_{\tau(x)_{k}})$ for all $k=1,2,...$ . It follows that there is some
$K\geq0$ such that $T^{K}(x)\in M_{\sigma_{K+1}}\backslash Int_{A}%
(M_{\sigma_{K+1}}).$ Here as in the first part of the proof, we write
$\tau(x)=\sigma=\sigma_{1}\sigma_{2}\sigma_{3}...$. It follows that there is a
sequence $\left\{  y_{j}\right\}  _{j=1}^{\infty}$ that converges to
$T^{K}(x)$ with $y_{j}\notin M_{\sigma_{K+1}}$ for all $j$. (Any neighborhood
of $T^{K}(x)$ must contain a point that is in $A\backslash M_{\sigma_{K+1}}$.)
It follows that $\left\{  f_{\sigma_{1}}\circ f_{\sigma_{2}}\circ...\circ
f_{\sigma_{K}}(y_{j})\right\}  _{j=1}^{\infty}$ converges to $f_{\sigma_{1}%
}\circ f_{\sigma_{2}}\circ...\circ f_{\sigma_{K}}(T^{K}(x))$ since
$f_{\sigma_{1}}\circ f_{\sigma_{2}}\circ...\circ f_{\sigma_{K}}:A\rightarrow
A$ is continuous. (We define $f_{\sigma_{1}}\circ f_{\sigma_{2}}\circ...\circ
f_{\sigma_{0}}=i_{A}$.) But $f_{\sigma_{1}}\circ f_{\sigma_{2}}\circ...\circ
f_{\sigma_{K}}(T^{K}(x))=x,$ while $\tau(f_{\sigma_{1}}\circ f_{\sigma_{2}%
}\circ...\circ f_{\sigma_{K}}(y_{j}))_{K+1}\neq\sigma_{K+1}$ because
$T^{K}(f_{\sigma_{1}}\circ f_{\sigma_{2}}\circ...\circ f_{\sigma_{K}}%
(y_{j}))=y_{j}\notin M_{\sigma_{K+1}}$. It follows that $\tau:A\rightarrow
\Omega_{\mathcal{M}}$ is not continuous. The desired conclusion follows at once.

(ii)\&(iii) Let $\sigma_{1}\sigma_{2}\sigma_{3}...\in\Omega_{\mathcal{M}}.$
Let $x=\tau^{-1}\left(  \sigma_{1}\sigma_{2}\sigma_{3}...\right)  $. Then
$\tau\circ T\circ\tau^{-1}\left(  \sigma_{1}\sigma_{2}\sigma_{3}...\right)
=\sigma_{2}\sigma_{3}\sigma_{4}...$, whence $\sigma_{2}\sigma_{3}\sigma
_{4}...\in\Omega_{\mathcal{M}}$. It follows that the shift map
\[
S:\Omega_{\mathcal{M}}\rightarrow\Omega_{\mathcal{M}},\sigma_{1}\sigma
_{2}\sigma_{3}...\mapsto\sigma_{2}\sigma_{3}\sigma_{4}...
\]
is well-defined, with $S(\Omega_{\mathcal{M}})\subset\Omega_{\mathcal{M}}$.

(iv) Let $\sigma_{1}\sigma_{2}\sigma_{3}...\in\Omega_{\mathcal{M}}$ and let
$i\in I$ be such that $M_{i}=f_{i}(A)$. Then $A\supset T(A)\supset T(M_{i})=A$
whence $A=T(A)$. By (iii)\ we have $S\circ\tau(A)=\tau\circ T(A)$ so $S\left(
\Omega_{\mathcal{M}}\right)  =\tau(A)=\Omega_{\mathcal{M}}$.
\end{proof}

We remark that masked dynamical systems are related to Markov partitions in
the theory of dynamical systems. See for example \cite[Proposition 18.7.8,
p.595]{katok}.

In general a masked dynamical system $T:A\rightarrow A$ depends on the mask
$\mathcal{M}$. By suitable choice of mask we can sometimes obtain a dynamical
system with a desired feature such as continuity, or which relates the
iterated function system to a known dynamical system, as illustrated in the
following example.

\begin{example}
\label{E:example1} Consider the IFS
\[
\mathcal{F}=\{\mathbb{R},\ f_{0}(x)=tx,\ f_{1}(x)=-tx+1\}
\]
where $t\in\lbrack{\frac{1}{2},1)}$ is a parameter. The attractor of
$A=[0,1].$ Let $M_{1}=[0,\frac{1}{2}],\ M_{2}=(\frac{1}{2},1]$. Then the
masked transformation
\[
T(x)=%
\begin{cases}
\frac{x}{t}, & x\in M_{1},\\
\frac{1-x}{t}, & x\in M_{2}%
\end{cases}
\]
is continuous. This is the well-known one-parameter tent map dynamical system,
see for example \cite[Exercise 2.4.1, p.78]{katok}. We note that, for any
$x\in(0,1)$ there exist a positive integer $n$ such that $T^{k}(x)\in\left[
\frac{2t-1}{2t^{2}},\frac{1}{2t}\right]  $ for all $k\geq n $ (see Fig.
\ref{example1}). As a consequence, if $\Lambda$ denotes the set of masked
addresses of points in $\left[  \frac{2t-1}{2t^{2}},\frac{1}{2t}\right]  $
then the masked address space for $\mathcal{F}$ is
\[
\underset{\text{n times}}{\{\underbrace{111...1}}\sigma|\sigma\in
\Lambda,n=0,1,2,...\}\cup\underset{\text{n times}}{\{2\underbrace{111...1}%
}\sigma|\sigma\in\Lambda,n=0,1,2,...\}\text{.}%
\]
%

\begin{figure}[ptb]%
\centering
\includegraphics[
natheight=14.586900in,
natwidth=14.826800in,
height=2.435in,
width=2.474in
]%
{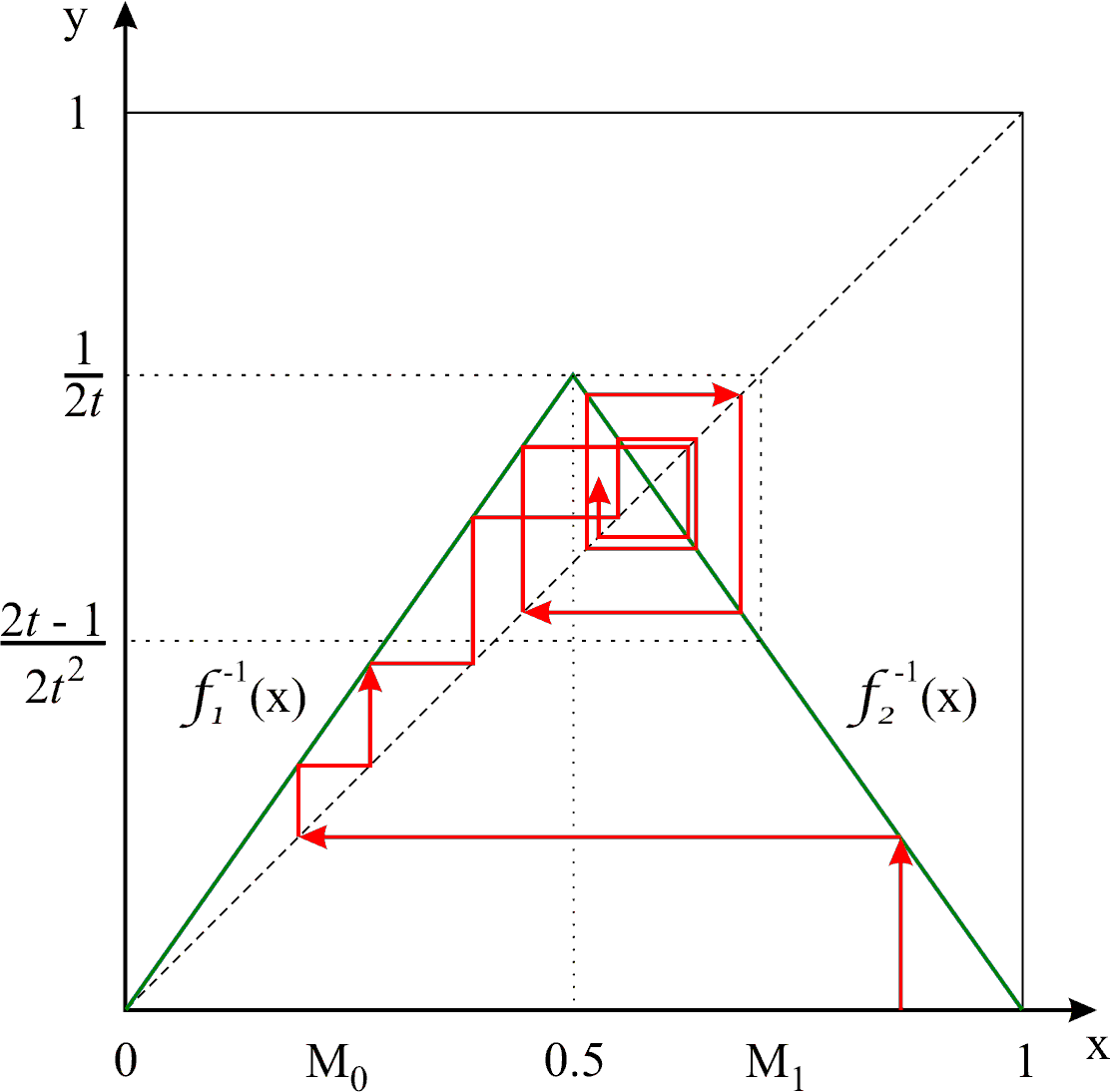}%
\caption{Masked dynamical system on the real interval $[0,1]$. The orbit of
any point eventually enter a trapping interval, $[(2t-1)/2t^{2},1/2t]$. See
Example \ref{E:example1}}%
\label{example1}%
\end{figure}

\end{example}

Theorem \ref{L:disjoint} concerns the relationship between masked address
spaces corresponding to distinct masks. It has an application to packing
multiple images into a single image, as illustrated in Example \ref{expack3}.

\begin{theorem}
\label{L:disjoint} Let $\mathcal{F}$ be a point-fibred iterated function
system with attractor $A$. Let $T:A\rightarrow A$ be a masked dynamical system
for $\mathcal{F}$ corresponding to mask $\mathcal{M}$. Let $\mu$ be a measure
on $A$. Let $\mathcal{M}^{\prime}$ be a mask for $\mathcal{F}$ such that
\[
\mu\left\{  x\in A:\left\{  T^{k}(x)\right\}  _{k=0}^{\infty}\bigcap\left(
\bigcup_{i\in I}\left(  M_{i}\triangle M_{i}^{\prime}\right)  \right)
=\emptyset\right\}  =0.
\]
Let $\tau_{\mathcal{M}}$ and $\tau_{\mathcal{M}^{\prime}}$ be the masked
sections of $\pi$ corresponding to $\mathcal{M}$ and $\mathcal{M}^{\prime}$.
Then $\tau_{\mathcal{M}}(x)\neq\tau_{\mathcal{M}^{\prime}}(x)$ for $\mu
-$almost all $x\in A$.
\end{theorem}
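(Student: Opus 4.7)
The plan is to argue pointwise. Fix $x\in A$ whose $T$-orbit meets $\bigcup_{i\in I}(M_i\triangle M_i')$; by hypothesis the set of $x$ for which this fails has $\mu$-measure zero, so it suffices to show $\tau_{\mathcal M}(x)\neq \tau_{\mathcal M'}(x)$ for every such $x$. Write $T'$ for the masked dynamical system associated with $\mathcal M'$. The initial observation is that if $y\in A$ satisfies $y\notin\bigcup_i (M_i\triangle M_i')$, then, since both $\{M_i\}$ and $\{M_i'\}$ are partitions of $A$, the unique index $i$ with $y\in M_i$ coincides with the unique index $i'$ with $y\in M_{i'}'$; hence $T(y)=f_i^{-1}(y)=T'(y)$ and the first symbols of $\tau_{\mathcal M}(y)$ and $\tau_{\mathcal M'}(y)$ agree.

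Iterating this observation gives a short induction on $j$: if $T^l(x)\notin \bigcup_i(M_i\triangle M_i')$ for $l=0,1,\dots,j-1$, then $(T')^l(x)=T^l(x)$ for $0\le l\le j$ and the first $j$ symbols of the two masked addresses coincide. Now let $k=k(x)\ge 0$ be the least integer with $T^k(x)\in \bigcup_i (M_i\triangle M_i')$. Applying the induction up to $j=k$ gives $(T')^k(x)=T^k(x)$ and matching initial segments of length $k$. At position $k+1$, $T^k(x)$ lies in some $M_{i_*}\triangle M_{i_*}'$; a short case check (either $T^k(x)\in M_{i_*}\setminus M_{i_*}'$, in which case $\sigma_{k+1}^{\mathcal M}(x)=i_*$ while the partition $\{M_i'\}$ places $T^k(x)$ in some $M_{i'}'$ with $i'\neq i_*$, or the symmetric case) shows that $\sigma_{k+1}^{\mathcal M}(x)\neq \sigma_{k+1}^{\mathcal M'}(x)$. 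Consequently $\tau_{\mathcal M}(x)\neq \tau_{\mathcal M'}(x)$, which together with the hypothesis on $\mu$ completes the proof.

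The only slightly subtle point is the orbit-synchronisation step: $\tau_{\mathcal M'}(x)$ is a priori determined by the $T'$-orbit of $x$, not the $T$-orbit, so one must justify that the two orbits agree until the first time the symmetric difference is visited. This is precisely what the first observation provides, and it is why the seemingly asymmetric hypothesis, phrased only in terms of $T$, is in fact enough to force disagreement of the two sections almost everywhere.
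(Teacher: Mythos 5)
Your proposal is correct and is essentially the paper's own argument in contrapositive form: the paper assumes $\tau_{\mathcal{M}}(x)=\tau_{\mathcal{M}'}(x)$, deduces that the two masked orbits coincide and therefore avoid $\bigcup_{i\in I}(M_{i}\triangle M_{i}')$, and invokes the measure-zero hypothesis, while you run the same synchronisation argument forward from the first visit of the $T$-orbit to the symmetric difference. Your write-up merely supplies the details (the induction showing the orbits agree until that first visit, and the case check forcing a symbol disagreement there) that the paper leaves implicit.
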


\begin{proof}
If $\tau_{\mathcal{M}}(x)=\tau_{\mathcal{M}^{\prime}}(x)$ it follows that
$T^{k}(x)=T_{\mathcal{M}^{\prime}}^{k}(x)$ for all $k$, where $T_{\mathcal{M}%
^{\prime}}:A\rightarrow A$ is the masked dynamical system corresponding to
$\mathcal{M}^{\prime}$. Then $T_{\mathcal{M}^{\prime}}^{k}(x)\notin\cup_{i\in
I}\left(  M_{i}\triangle M_{i}^{\prime}\right)  $ for all $k$. But this is
impossible for $\mu-$almost all $x\in A$.
\end{proof}

\subsection{(b) The fractal tops method}

"Fractal tops" is the name we use to refer to the mathematics of tops
functions, tops code spaces, tops dynamical systems, and associated fractal
transformations; see for example \cite{monthly, germany}. Here we show that,
in the case where $\mathcal{F}$ is injective, fractal tops arise as a special
case of masked iterated function systems. Specifically, a \textit{tops code
space} is a special case of \textit{masked address space, a tops function} is
a special case of a \textit{section of }$\pi,$ and a \textit{tops dynamical
system} is a special case of a \textit{masked dynamical system}$.$ The
computations associated with fractal tops tend to be less complicated than
those for masked systems.

Define a dictionary ordering (see for example \cite[p.26]{munkres}) on
$I^{\infty}$ as follows: if $\sigma,\omega\in I^{\infty}$, $\sigma\neq\omega,$
then%
\[
\sigma<\omega\text{ iff }\sigma_{k}>\omega_{k},
\]
where $k$ is the least index for which $\sigma_{k}\neq\omega_{k}$, for all
$\sigma,\omega\in I^{\infty}$. With this ordering, every subset of $I^{\infty
}$ possesses a greatest lower bound and a least upper bound. Since
$\pi:I^{\infty}\rightarrow A\subset X$ is continuous and $I^{\infty}$ is
compact, $\pi^{-1}(x)=\{\sigma\in I^{\infty}:\pi(\sigma)=x\}$ possesses a
unique largest element, $\max\pi^{-1}(x)$, for each $x\in A$. This allows us
to define an address space $\Omega\subset I^{\infty}$ for $\mathcal{F}$ by%
\[
\Omega=\{\max\pi^{-1}(x)|x\in A\}\text{.}%
\]
The corresponding section of $\pi$ is%
\[
\tau:A\rightarrow\Omega,\text{ }x\mapsto\max\pi^{-1}(x)\text{.}%
\]
If $\mathcal{F}$ is injective then the tops dynamical system is a masked
dynamical system, corresponding to the mask defined by%
\[
M_{i}=f_{i}(A)\backslash\bigcup_{j=1}^{i-1}f_{j}(A),\ i\in I.
\]
The fact that $\tau\left(  x\right)  $ can be computed from the set $\pi
^{-1}(x)$ without reference to other points on the orbit of $x$ simplifies the
computation of $\tau\left(  x\right)  $ in applications, see for example
\cite{superfractals, monthly}. Note that we can use the orbits of a tops
dynamical system to calculate the top address of any point $x\in A$ according
to $\tau(x)=\sigma_{1}\sigma_{2}...$ where%
\[
\sigma_{k}=\min\{n\in\{1,2,...,N\}:T^{\circ(k-1)}(x)\in f_{n}(A)\}.
\]

\section{\label{appsec}Applications and examples}

\subsection{\label{appsec1}Application to image synthesis}

Here we generalize the technique of color-stealing, introduced in
\cite{barnsley} and implemented for example in \cite[p.65-66]{nickei}.

Define a \textit{picture} to be a function of the form
\[
\mathfrak{P}:D\subset\mathbb{R}^{2}\rightarrow\mathfrak{C}%
\]
where $\mathfrak{C}$ is a color space. The set $D$ is the \textit{domain} of
the picture. We are concerned with situations where $D$ is a subset of an
attractor of an iterated function system.

Let $\mathcal{F}$ be an injective iterated function system with attractor
$A\subset\mathbb{R}^{2}$. Let $\mathcal{F}^{\prime}$ be an iterated function
system with attractor $A^{\prime}\subset\mathbb{R}^{2}$ and the same code
space $I^{\infty}$ as for $\mathcal{F}$. Let $\mathcal{M}$ be a mask for
$\mathcal{F}$ and let $\tau_{\mathcal{M}}$ be the corresponding section of
$\pi^{-1}.$ Let $\pi^{\prime}$ be the coding map for $\mathcal{F}^{\prime}$.
Then we can define a mapping $\Phi$ from the space of pictures on $A^{\prime}$
into the space of pictures on $A$ according to
\[
\Phi_{\mathcal{M}}(\mathfrak{P}^{\prime})=\mathfrak{P}^{\prime}\circ
\pi^{\prime}\circ\tau_{\mathcal{M}}.
\]
We refer to this procedure as color-stealing because colors from the picture
$\mathfrak{P}^{\prime}$ are mapped onto the attractor $A$ to define the new
picture $\mathfrak{P}=\Phi_{\mathcal{M}}(\mathfrak{P}^{\prime})$.

It follows from Theorem \ref{maskbranchthm} (i) that the transformation
$\pi^{\prime}\circ\tau_{\mathcal{M}}$ is continuous at all points $x\in A$
whose orbits lie in $A\backslash$ $\cup_{k=0}^{\infty}\mathcal{F}^{k}%
(\cup_{i\in I}\partial M_{i}),$ where $\partial M_{i}$ denotes the boundary of
$M_{i}$. In some cases, such as those in Example \ref{stealingex}, $\cup
_{k=0}^{\infty}\mathcal{F}^{k}(\cup_{i\in I}\partial M_{i})$ is a set of
Lebesgue measure zero, so the transformation is continuous almost everywhere.
This explains patches of similar colors tends to exist in pictures that are
obtained by color-stealing from real world photos, where patches of similar
colors occur for physical reasons.

\begin{example}
\label{stealingex} Figure \ref{combined} illustrates color-stealing using (i)
fractal tops (left), and (ii) a masked iterated function system (right) that
is not a tops system. The picture $\mathfrak{B}^{\prime}$ from which colors
are stolen is Lena embedded in a black surround, shown in the middle panel. In
(i) $\mathcal{F}^{\prime}$ is an affine iterated function system whose
attractor $A^{\prime}$ is a filled square, the domain of $\mathfrak{B}%
^{\prime}$, such that $\{f_{i}^{\prime}(A^{\prime})\}_{i=1}^{4}$ is a set of
tiles that tile $A^{\prime}$ by rectangles. In (i) $\mathcal{F}$ is the
projective iterated function system $(\mathbb{RP}^{2};f_{1},f_{2},f_{3}%
,f_{4})$, where%
\[
f_{n}(x,y)=(\frac{a_{n}x+b_{n}y+c_{n}}{g_{n}x+h_{n}y+j_{n}},\frac{d_{n}%
x+e_{n}y+k_{n}}{g_{n}x+h_{n}y+j_{n}})\text{ for }n=1,2,3,4\text{;}%
\]%
\[%
\begin{tabular}
[c]{|c|c|c|c|c|c|c|c|c|c|}\hline
$n$ & $a_{n}$ & $b_{n}$ & $c_{n}$ & $d_{n}$ & $e_{n}$ & $k_{n}$ & $g_{n}$ &
$h_{n}$ & $j_{n}$\\\hline
$1$ & $19.05$ & $0.72$ & $1.86$ & $-0.15$ & $16.9$ & $-0.28$ & $5.63$ & $2.01
$ & $20.0$\\
$2$ & $0.2$ & $4.4$ & $7.5$ & $-0.3$ & $-4.4$ & $-10.4$ & $0.2$ & $8.8$ &
$15.4$\\
$3$ & $96.5$ & $35.2$ & $5.8$ & $-131.4$ & $-6.5$ & $19.1$ & $134.8$ & $30.7$
& $7.5$\\
$4$ & $-32.5$ & $5.81$ & $-2.9$ & $122.9$ & $-0.1$ & $-19.9$ & $-128.1$ &
$-24.3$ & $-5.8$\\\hline
\end{tabular}
.
\]
and $\mathcal{F}^{\prime}$ is an affine IFS whose attractor is a filled
square; see also \cite[p.2]{superfractals}. In (ii) the iterated function
system $\mathcal{F}$ and mask $\mathcal{M}$ are the same as in Example
\ref{goldenlennaex}, while $\mathcal{F}^{\prime}$ is a perturbed version of
$\mathcal{G}$ in Example \ref{goldenlennaex}.%
\begin{figure}[ptb]%
\centering
\includegraphics[
natheight=12.239900in,
natwidth=38.479698in,
height=1.7144in,
width=5.3275in
]%
{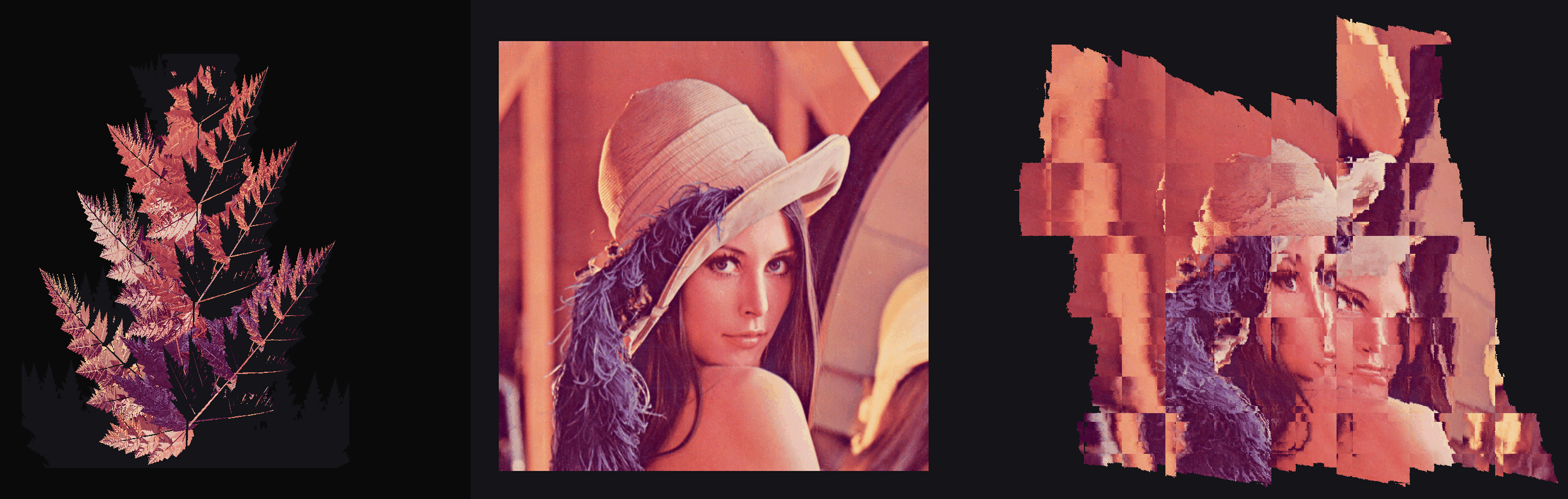}%
\caption{An example of color-stealing using a masked iterated function system
(right) and a tops function\ (left). Although the stolen pictures exhibit many
discontinuites, the transformations, from the original picture of Lena to the
stolen pictures, are continuous almost everywhere. See \ref{stealingex}.}%
\label{combined}%
\end{figure}

\end{example}

\subsection{\label{fhomsec}Fractal homeomorphisms for image beautification,
roughening, and special effects}

Under the conditions of Theorem \ref{ctyfractaltransthm} (ii) the fractal
transformation $T_{\mathcal{FG}}$ is a homeomorphism$.$ Such homeomorphisms
can be applied to pictures to yield new pictures that have the same
topological properties as the original. For example the connectivity
properties of the set defined by a particular color will be preserved, as will
be the property that certain colors lie in an arbitrary neighborhood of a
point. But geometrical properties, such as Hausdorff dimension and
collinearity, may not be preserved and, indeed, may be significantly changed.

Techniques for constructing and computing fractal homeomorphisms using fractal
tops, with projective, affine, and bilinear IFSs, have been discussed in
\cite{barnsley, superfractals, monthly, germany, nickei}. Families of fractal
homeomorphisms, built from such transformations in $\mathbb{R}^{2}$, may be
established by using code space arguments. Typically, the attractors of the
iterated function systems in question are non-overlapping, which simplifies
the proofs: in some situations one only needs to show that the equivalence
classes of addresses agree on certain straight line segments. The resulting
families of transformations are described by a finite sets of real parameters.
These parameters may be adjusted to achieve desired effects such as increased
roughness, or continuous (but non-differentiable) transformation from a
meaningless picture into a meaningful one. Both of these effects are
illustrated in Example \ref{roughex}.

\begin{example}
\label{roughex} Figure \ref{transforms} illustrates three fractal
homeomorphisms of the unit square applied to Lena. All the iterated function
systems involved are constructed using bilinear functions defined as follows.
Let $\square=[0,1]^{2}\subset\mathbb{R}^{2}$ denote the unit square, with
vertices $A=(0,0),B=(1,0),C=(1,1),D=(0,1).$ Let $P,Q,R,S$ denote, in cyclic
order, the successive vertices of a possibly degenerate quadrilateral. We
uniquely define a bilinear function $\mathcal{B}:\mathcal{R}\rightarrow
\mathcal{R}$ such that $\mathcal{B}(ABCD)=PQRS$ by%
\[
\mathcal{B}(x,y)=P+x(Q-P)+y(S-P)+xy(R+P-Q-S).
\]
This transformation acts affinely on any straight line that is parallel to
either the $x$-axis or the $y$-axis.\ For example, if $\mathcal{B}%
|_{AB}:AB\rightarrow PQ$ is the restriction to $AB$ of $\mathcal{B}$ and if
$\mathcal{Q}:\mathbb{R}^{2}\rightarrow\mathbb{R}^{2}$ is the affine function
defined by $\mathcal{Q}(x,y)=P+x(Q-P)+y(S-P),$ then $\mathcal{Q}%
|_{AB}=\mathcal{B}|_{AB}$. Sufficient conditions for a bilinear iterated
function system to be point-fibred are given in \cite{germany}. Each
homeomorphism in Figure \ref{transforms} is generated using a pair of iterated
function systems of the form in Figure \ref{bilinear2}; each such pair has the
same address structure.%
\begin{figure}[ptb]%
\centering
\includegraphics[
natheight=10.893300in,
natwidth=12.586900in,
height=2.4375in,
width=2.4375in
]%
{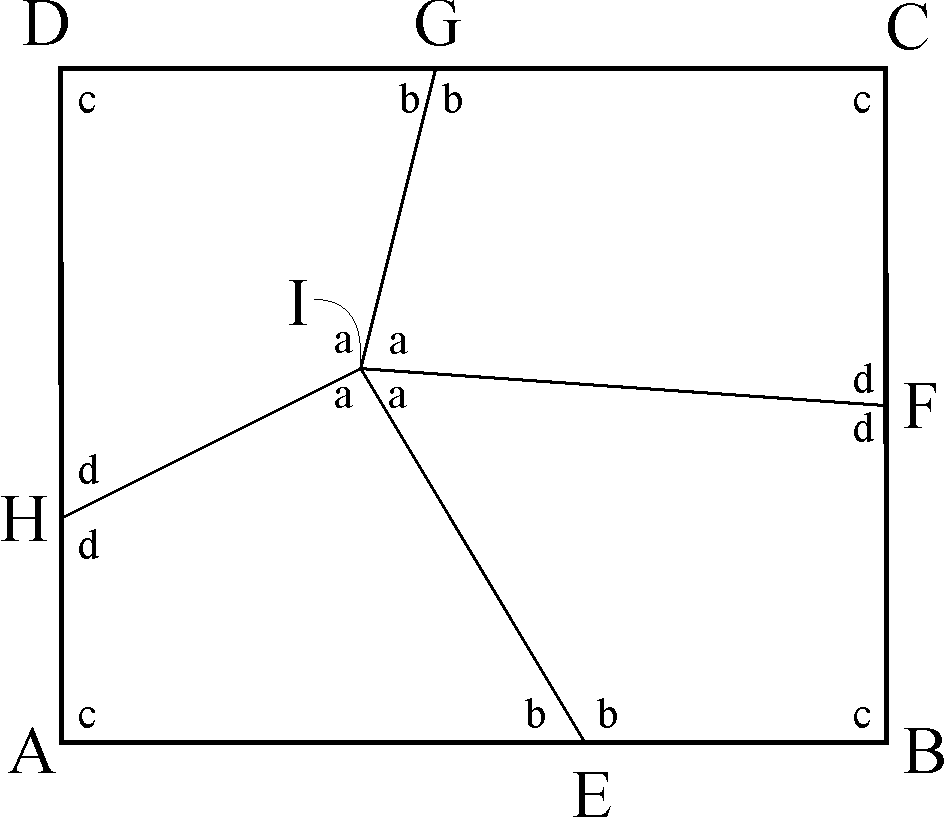}%
\caption{The four quadrilaterals $IEAH$, $IEBF,$ $IGCD$, $IGDH$, define four
bilinear transformations and a corresponding iterated function system whose
attractor is a filled square. Provided that the quadrilaterals are close
enough to quadrants of the square, the iterated function systems is
point-fibred.}%
\label{bilinear2}%
\end{figure}

\end{example}

The following theorem provides practical sufficient conditions for a bilinear
IFS to be hyperbolic.%
\begin{figure}[ptb]%
\centering
\includegraphics[
natheight=6.933100in,
natwidth=20.800200in,
height=1.692in,
width=5.0195in
]%
{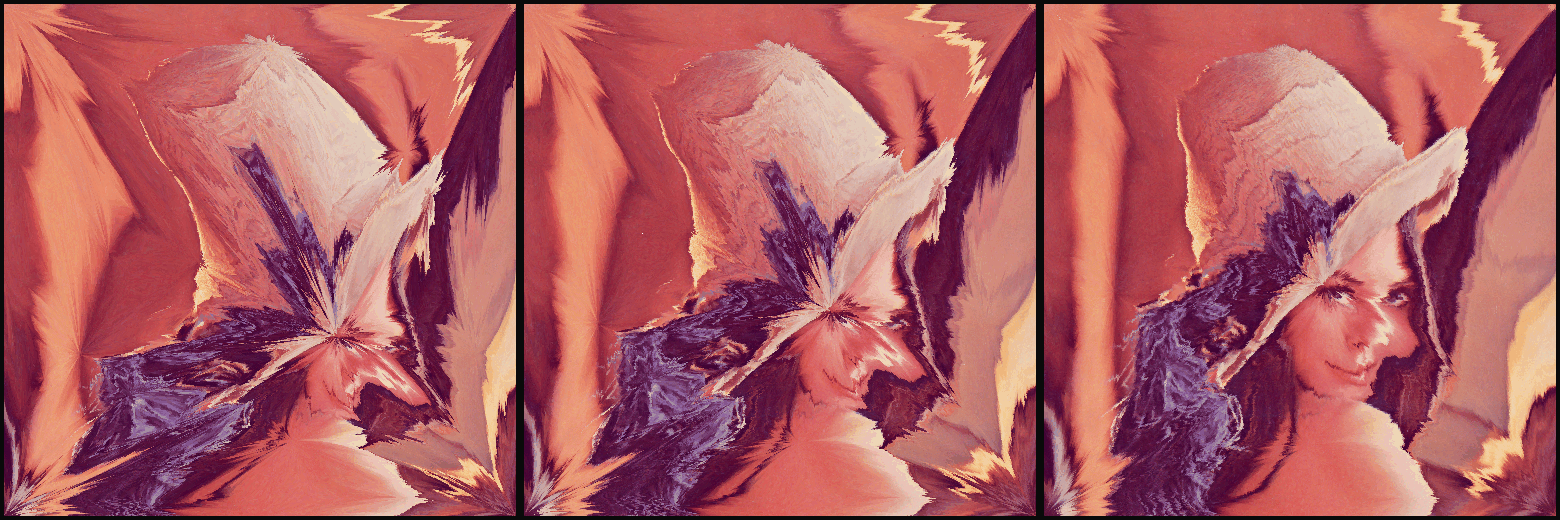}%
\caption{Three fractal homeomorphisms applied to Lena. See Example
\ref{roughex}.}%
\label{transforms}%
\end{figure}

It is more difficult to establish conditions under which pairs of masked
overlapping iterated function systems, built from geometrical functions such
as affines on $\mathbb{R}^{2},$ yield homeomorphisms. In order to establish
that a pair of masked iterated function systems provides a fractal
transformation that is a homeomorphism, it is necessary to establish that the
two masked address spaces agree. Interesting non-trivial cases involve
overlapping iterated function systems, and cannot be reformulated as fractal
tops. A beautiful family of such examples is provided by Theorem \ref{goldthm}.

\begin{theorem}
\label{goldthm} Let $\mathcal{F}=([0,1],f_{1}(x)=ax,f_{2}(x)=by+(1-b)),$ where
$a\geq b>0,$ and $a+b\geq1,$ have mask $M^{(p)}=\{M_{1},M_{2}\}$ where
$M_{1}=[0,p]$, $M_{2}=(p,1],$ and $p\in\lbrack1-b,a]$. Let $\mathcal{G}%
=([0,1],g_{1}(x)=bx,g_{2}(x)=ay+(1-a))$ have mask $M^{(1-p)}$. Then there
exists $p^{\ast}\in\lbrack1-b,a]$ such that the corresponding fractal
transformation $T_{\mathcal{FG}}:[0,1]\rightarrow\lbrack0,1]$ is a
homeomorphism when $p=p^{\ast}.$ The inverse of this homeomorphism is
$T_{\mathcal{GF}}:[0,1]\rightarrow\lbrack0,1]$ defined by associating the mask
$M^{(1-p^{\ast})}$ with $\mathcal{G}$.
\end{theorem}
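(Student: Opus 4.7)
The plan is to apply Theorem \ref{ctyfractaltransthm}(ii), which reduces the problem to finding $p^{*} \in [1-b,a]$ at which the masked address spaces $\Omega^{\mathcal{F}}_{M^{(p^{*})}}$ and $\Omega^{\mathcal{G}}_{M^{(1-p^{*})}}$ coincide as subsets of $I^{\infty}$, and the equivalence relations induced by $\pi_{\mathcal{F}}$ and $\pi_{\mathcal{G}}$ agree on their common closure. Both masked dynamical systems $T_{\mathcal{F}}$ (with mask $M^{(p)}$) and $T_{\mathcal{G}}$ (with mask $M^{(1-p)}$) are Lorenz-type piecewise-linear expanding maps of $[0,1]$ with two orientation-preserving branches and a single discontinuity at the mask boundary, so their masked address spaces are determined by the itinerary of the discontinuity together with its one-sided limit (the kneading pair).

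The key reduction is that matching of the entire address-space and equivalence-class data is equivalent to the single scalar equation
\[
T_{\mathcal{FG},p}(p) = 1 - p.
\]
Indeed, $p$ carries two codes in $\overline{\Omega^{\mathcal{F}}_{M^{(p)}}}$, namely $\kappa = 1\cdot\tau^{(p)}_{\mathcal{F}}(p/a)$ and $\lambda = 2\cdot\tau^{(p)}_{\mathcal{F}}((p-1+b)/b)$, the two $\pi_{\mathcal{F}}$-representatives of $p$; these must match, under $\pi_{\mathcal{G}}$, the two representatives of $1-p$ in $\overline{\Omega^{\mathcal{G}}_{M^{(1-p)}}}$, which is precisely the scalar equation above. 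All further $\pi_{\mathcal{F}}$-identifications propagate from this single one via the semigroup generated by $f_{1},f_{2}$, and the intertwining $\pi_{\mathcal{G}}(i\sigma) = g_{i}(\pi_{\mathcal{G}}(\sigma))$ transports them to $\pi_{\mathcal{G}}$-identifications, yielding matching kneading pairs and hence matching address spaces.

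To produce $p^{*}$ I would apply the intermediate value theorem to
\[
\theta(p) := T_{\mathcal{FG},p}(p) + p - 1, \qquad p \in [1-b, a].
\]
Continuity of $\theta$ in $p$ follows from continuity of $\pi_{\mathcal{G}}$ in the product topology, together with the observation that any fixed-length prefix of $\tau^{(p)}_{\mathcal{F}}(p)$ is locally constant in $p$ away from the countable set of parameters at which some forward iterate of $p$ hits the mask boundary. At $p = a$, the orbit $a \mapsto 1 \mapsto 1 \mapsto \cdots$ gives $\tau^{(a)}_{\mathcal{F}}(a) = 1\overline{2}$ and hence $T_{\mathcal{FG},a}(a) = g_{1}(1) = b$, so $\theta(a) = a + b - 1 \ge 0$. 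At $p = 1-b$, using the natural conjugacy $T_{\mathcal{FG},p}(x) + T_{\mathcal{GF},1-p}(1-x) = 1$ (valid for $x \neq p$, obtained from the involution $x \mapsto 1-x$ with symbol swap) together with the dual orbit computation $\tau^{(b)}_{\mathcal{G}}(b) = 1\overline{2}$ giving $T_{\mathcal{GF},b}(b) = a$, one obtains $\theta(1-b) \le 0$ after careful treatment of the one-sided limit at the mask boundary. The IVT then yields $p^{*} \in [1-b,a]$ with $\theta(p^{*}) = 0$, and Theorem \ref{ctyfractaltransthm}(ii) delivers that $T_{\mathcal{FG}}$ is a homeomorphism with inverse $T_{\mathcal{GF}}$ (built from the mask $M^{(1-p^{*})}$ on $\mathcal{G}$).

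The main obstacle will be the rigorous verification of two ingredients: first, the continuity of $\theta$ in $p$, since the underlying masked dynamical system itself varies with the parameter; and second, the determination of the sign of $\theta$ at the left endpoint, which requires propagating the one-sided limit of $\tau^{(p)}_{\mathcal{F}}$ at the boundary through the conjugacy and the dual orbit analysis. A secondary challenge is to justify in detail that the single scalar matching at $p^{*}$ is strong enough to guarantee the full address-space and equivalence-class agreement demanded by the hypotheses of Theorem \ref{ctyfractaltransthm}(ii), rather than merely a necessary condition.
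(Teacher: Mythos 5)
Your strategy (reduce to a kneading-matching condition at the partition point and locate $p^{\ast}$ by an intermediate value argument in the parameter) is genuinely different from the paper's, but as written it has two gaps, and the first one is exactly where the argument would fail. The IVT step needs $\theta(p)=\pi_{\mathcal{G}}(\tau_{\mathcal{F}}^{(p)}(p))+p-1$ to be continuous on all of $[1-b,a]$, and it is not: the itinerary $p\mapsto\tau_{\mathcal{F}}^{(p)}(p)$ jumps at the (countably many, but possibly dense) parameters where some iterate of $p$ under the $p$-dependent masked system lands on the partition point, and "fixed-length prefixes are locally constant off a countable set" only gives continuity off that set. At such a parameter $\theta$ can jump, and nothing you have said prevents it from jumping across $0$ without a root; to rescue the argument you would need a monotonicity or one-sided-continuity statement for the kneading data in $p$ (or an argument that matching occurs at the jump parameters themselves), none of which is supplied. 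The second gap is the claimed "key reduction": the single scalar equation $\pi_{\mathcal{G}}(\tau_{\mathcal{F}}^{(p)}(p))=1-p$ does not obviously imply the full hypothesis of Theorem \ref{ctyfractaltransthm}(ii). The natural bootstrap gives $\pi_{\mathcal{G}}(S^{k}\tau_{\mathcal{F}}^{(p)}(p))=T_{\mathcal{G}}^{k}(1-p)$ and hence only that $T_{\mathcal{G}}^{k}(1-p)$ lies in $g_{\sigma_{k+1}}([0,1])$; since the images $g_{1}([0,1])=[0,b]$ and $g_{2}([0,1])=[1-a,1]$ overlap in $[1-a,b]$, this does not decide on which side of the $\mathcal{G}$-partition point $1-p$ the iterate lies, so the symbol agreement you need at every step (and for both one-sided limit itineraries at the discontinuity) requires a genuine order/admissibility argument that is asserted rather than given.

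For comparison, the paper avoids both difficulties by never varying the parameter inside a limiting argument: it forms the planar map $Q$ built from $W_{-}(x,y)=(ax,by)$ and $W_{+}(x,y)=(bx+1-b,ay+1-a)$ cut along the line $x+y=1$, and constructs its repeller $\mathcal{R}\subset[0,1]^{2}$ as a nested intersection. This $\mathcal{R}$ is the graph of a continuous, strictly increasing surjection $\Phi:[0,1]\rightarrow[0,1]$ with the symmetry $\Phi(1-\Phi(x))=1-x$, so $x\mapsto\Phi(x)+x-1$ is continuous and strictly increasing and has a unique zero $p^{\ast}$; continuity is in the space variable of a single parameter-free object, so no parameter-continuity issue arises. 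Moreover, at $p=p^{\ast}$ the two masked dynamical systems $T_{\mathcal{F}}$ and $T_{\mathcal{G}}$ are realized as the two coordinate projections of $Q$ restricted to the invariant graph $\mathcal{R}$, so $\Phi$ simultaneously carries the $M^{(p^{\ast})}$-itineraries of $\mathcal{F}$ to the $M^{(1-p^{\ast})}$-itineraries of $\mathcal{G}$; this delivers the address-space and equivalence-class agreement (your second gap) as a byproduct of the construction rather than as a separate propagation lemma. If you want to keep your one-dimensional kneading approach, you should either prove a monotonicity statement for $p\mapsto\theta(p)$ with controlled one-sided limits, or adopt the paper's device of encoding both systems at once in a single invariant graph.
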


\begin{proof}
This is an outline. Let $\square=[0,1]^{2}\subset\mathbb{R}^{2}$. Define
affine transformations by
\begin{align*}
W_{-}  &  :\mathbb{R}^{2}\rightarrow\mathbb{R}^{2},(x,y)\mapsto(ax,by),\\
W_{+}  &  :\mathbb{R}^{2}\rightarrow\mathbb{R}^{2},(x,y)\mapsto
(bx+1-b,ay+1-y).
\end{align*}
Let $S_{-}=\{(x,y)\in\mathbb{R}^{2}:x+y\leq1\}$ and $S_{+}=\mathbb{R}%
^{2}\backslash S_{-}$. We consider the dynamical system%
\[
Q:\mathbb{R}^{2}\rightarrow\mathbb{R}^{2},(x,y)\mapsto\left\{
\begin{array}
[c]{c}%
W_{-}^{-1}(x,y)\text{ if }(x,y)\in S_{-},\\
W_{+}^{-1}(x,y)\text{ if }(x,y)\in S_{+}.
\end{array}
\right.
\]
This possesses a "repeller", a compact set $\mathcal{R}\subset\square,$ such
that
\[
\mathcal{R}=Q(\mathcal{R})\text{.}%
\]
In order to define $\mathcal{R}$, we define
\[
W:K\left(  \square\right)  \rightarrow K\left(  \square\right)  ,C\mapsto
(S_{-}\cap W_{-}(C))\cup(\overline{S_{+}}\cap W_{+}(C)),
\]
and
\[
\mathcal{R}=\lim_{k\rightarrow\infty}W^{k}(\square).
\]
$\mathcal{R}$ is well-defined because it is the intersection of a decreasing
sequence of nonempty compact sets. (It is quite easy to see that $\mathcal{R}
$ is the graph of a monotone function from $[0,1]$ onto $[0,1].$) Using
symmetry about the line $x+y=1$ and the contractivity of $W_{-}$ and $W_{+}$
in both the $x$ and $y$ directions it can be proved that $\mathcal{R}$ has the
following properties.

(i) $Q(\mathcal{R})=\mathcal{R};$

(ii) $\mathcal{R}$ is symmetrical about the line $x+y=1;$

(iii) $P_{-}(\mathcal{R})=P_{+}(\mathcal{R})=[0,1],$ where $P_{-}%
:\mathbb{R}^{2}\rightarrow\mathbb{R}$ denotes the projection in the $x$
direction and $P_{+}:\mathbb{R}^{2}\rightarrow\mathbb{R}$ denotes projection
in the $y$ direction;

(iv) there is a continuous, monotone strictly increasing function
$\Phi:[0,1]\rightarrow\lbrack0,1],$ such that $\Phi(0)=0$, $\Phi(1)=1$,
$\Phi(1-\Phi(x))=1-x$ for all $x\in\lbrack0,1],$ and $\mathcal{R}%
=\{(x,\Phi(x)):x\in\lbrack0,1]\}$;

(v) there is a unique $p^{\ast}\in\lbrack1-b,a]$ such that $\Phi(p^{\ast
})=1-p^{\ast};$

(vi) there is a continuous, monotone strictly increasing function,
$\Psi:[0,1]\rightarrow\lbrack0,1],$ such that $\Psi(0)=0$, $\Psi(1)=1$,
$\Psi(1-\Psi(y))=1$ for all $y\in\lbrack0,1]$, and $\mathcal{R}=\{(y,1-\Psi
(1-y)):y\in\lbrack0,1]\}$;

(vii) $\Psi(1-p^{\ast})=p^{\ast};$

(viii) $\Psi(y)=\Phi^{-1}(y)$ for all $y\in\lbrack0,1]$;

(ix) if $p=p^{\ast}$, then the masked dynamical system $T_{\mathcal{F}%
}:[0,1]\rightarrow\lbrack0,1]$ obeys%
\[
T_{\mathcal{F}}(x)=P_{-}(Q(x,\Phi(x)))\text{ for all }x\in\lbrack0,1];
\]

(x) if $p=p^{\ast}$, then the masked dynamical system $T_{\mathcal{G}%
}:[0,1]\rightarrow\lbrack0,1]$ obeys%
\[
T_{\mathcal{G}}(y)=P_{+}(Q(\Psi(y),y))\text{ for all }y\in\lbrack0,1].
\]

These statements imply the theorem.
\end{proof}

\begin{remark}
Clearly essentially the same result and proof applies for any analogous pair
of overlapping strictly increasing functions on $[0,1]$.
\end{remark}

Figure \ref{goldenimage} illustrates the "repeller". It is a subset of the
attractor of the iterated function system $(\square;(ax,by),(bx+1-b,ay+1-a)),$
and may be used to compute $p^{\ast}$ as illustrated in Figure \ref{figure}.%
\begin{figure}[ptb]%
\centering
\includegraphics[
natheight=13.652900in,
natwidth=13.652900in,
height=2.435in,
width=2.435in
]%
{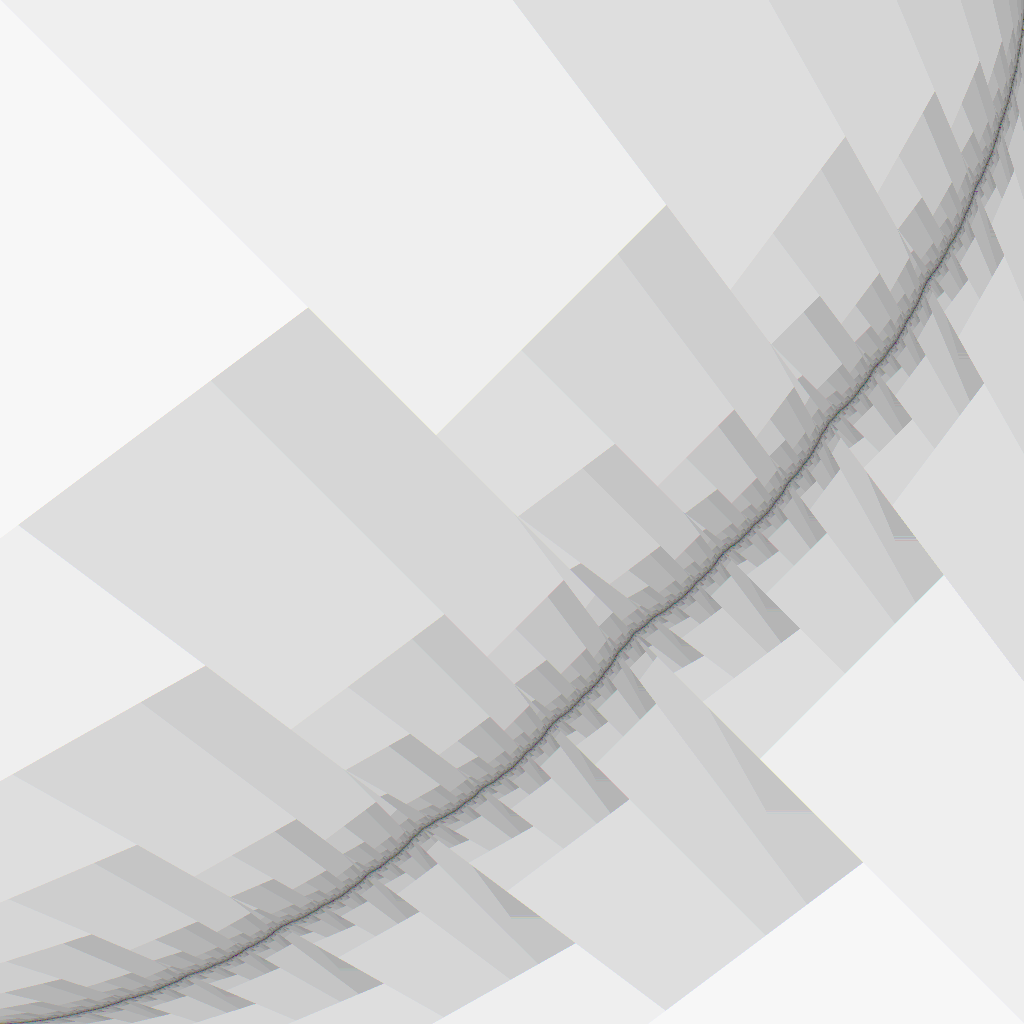}%
\caption{This shows approximations, in shades of grey, to the "repeller"
$\mathcal{R}$ of the dynamical system $Q:\mathbb{R}^{2}\rightarrow
\mathbb{R}^{2}$ described in the outline proof of Theorem \ref{goldthm}. An
escape-time algorithm, similar to the ones discussed in \cite[Ch.
7]{barnsleyFE}, was used to make this image, with $a=2/3$ and $b=1/2.$}%
\label{goldenimage}%
\end{figure}

An example of a fractal transformation, arising from a masked pair of
overlapping affine iterated function systems, is given in Example
\ref{goldenlennaex}. The resulting homeomorphism with $a=\frac{2}{3}$ and
$b=\frac{1}{2}$ yields a picture of Lena with extra large eyes, Figure
\ref{goldenlenna}.

\begin{example}
\label{goldenlennaex} Let $\mathcal{H}_{p,q}:=(X;h_{1},h_{2},h_{3},h_{4})$ be
the family of affine iterated function systems defined by $X\mathbb{=\{}%
(x,y)\in\mathbb{R}^{2}|0\leq x,y\leq1\},$ $r,s\in(0,1),$
\begin{align*}
h_{1}(x,y)  &  =(rx,ry),h_{2}(x,y)=(sx+1-s,ry),\\
h_{3}(x,y)  &  =(sx+1-s,sy+1-s),h_{4}(x,y)=(rx,sy+1-s).
\end{align*}
Let $\mathcal{F}=\mathcal{H}_{\frac{2}{3},\frac{1}{2}}$ and, for $p\in
\lbrack\frac{1}{2},\frac{2}{3}],$ let $M_{p}$ be the mask$\{M_{1},M_{2}%
,M_{3},M_{4}\}$ where
\begin{align*}
M_{1}  &  =\{(x,y)\in X:x\leq p,y\leq p\},M_{2}=\{(x,y)\in X:x>p,y\leq p\},\\
M_{3}  &  =\{(x,y)\in X:x>p,y>p\},M_{4}=\{(x,y)\in X:x\leq p,y>p\}.
\end{align*}
Let $\mathcal{G}=\mathcal{H}_{\frac{1}{2},\frac{2}{3}}.$ Then, by Theorem
\ref{goldthm}, there is a value of $p=$ $p^{\ast}\in(\frac{1}{2},\frac{2}%
{3}),$ such that the condition in Theorem \ref{ctyfractaltransthm} (ii) holds,
and if $p=p^{\ast}$ then $T_{\mathcal{FG}}:X\rightarrow X$ then is a
homeomorphism. The value of $p^{\ast}$ is%
\[
p^{\ast}=\max\{p\in(\frac{1}{2},\frac{2}{3}):(p,1-p)\in A^{\ast}\}
\]
where $A^{\ast}\subset X$ is the attractor of $\mathcal{H}:=(X;(\frac{2}%
{3}x,\frac{1}{2}y),(\frac{1}{2}x+\frac{1}{2},\frac{2}{3}y+\frac{1}{3}))$
illustrated in Figure \ref{figure}. Experimentally we find $p^{\ast}%
\doteq0.618$ which is used to compute Figure \ref{goldenlenna}$.$ The original
Lena was overlayed on a black background, as in Figure \ref{combined}. The
transformed picture also had a black background that has been omitted here.
\end{example}

%

\begin{figure}[ptb]%
\centering
\includegraphics[
natheight=9.227000in,
natwidth=9.133200in,
height=2.435in,
width=2.4109in
]%
{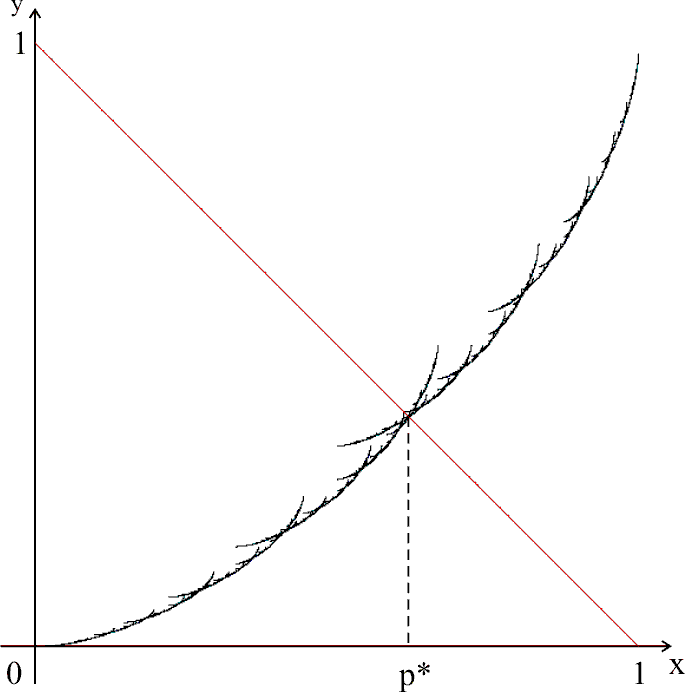}%
\caption{The value $x=p^{\ast}$ in Example \ref{goldenlennaex} is the maximum
$x$ such that the line $y=1-x$ (red) meets the attractor $A^{\ast}$ (black) of
$(\square;(\frac{2}{3}x,\frac{1}{2}y),(\frac{1}{2}x+\frac{1}{2},\frac{2}%
{3}y+\frac{1}{3}))$. }%
\label{figure}%
\end{figure}

\subsection{\label{filtsec} Application to image filtering}

Here we restrict attention to iterated function systems defined on
$X=[0,1]^{2}\subset$ $\mathbb{R}^{2}$. We are concerned with fractal
transformations from $[0,1]^{2}$ to itself, associated with a pair of iterated
function systems of the form%
\[
\mathcal{F}=([0,1]^{2},f_{1},f_{2},...,f_{N})\text{, }\mathcal{G}%
=([0,1]^{2},g_{1},g_{2},...,g_{N}).
\]
We suppose that $A_{\mathcal{F}}=A_{\mathcal{G}}=[0,1]^{2}$. In applications
to digital imaging, $[0,1]^{2}$ is discretized. Here we refer to the the
locations of pixels as discretized coordinates. We suppose that discretized
versions of $A_{\mathcal{F}}$ and $A_{\mathcal{G}}$ have resolutions
$r_{\mathcal{F}}$ and $r_{\mathcal{G}}$ respectively.

Let $P_{\mathcal{F}}:A_{\mathcal{F}}\rightarrow A_{\mathcal{F}}$ be a
projection operator, namely a function with the property $P_{\mathcal{F}}\circ
P_{\mathcal{F}}(x)=P_{\mathcal{F}}(x)$ for all $x\in A_{\mathcal{F}}$. For
example,
\[
P_{\mathcal{F}}(x)=x\text{ discretized to resolution }r_{\mathcal{F}},\text{
for all }x\in\lbrack0,1]^{2}\text{.}%
\]
If $T_{\mathcal{FG}}:A_{\mathcal{F}}\rightarrow A_{\mathcal{G}}$ is a
homeomorphism between the non-discretized spaces then
\[
P_{\mathcal{G}}:=T_{\mathcal{FG}}\circ P_{\mathcal{F}}\circ T_{\mathcal{GF}%
}:A_{\mathcal{G}}\rightarrow A_{\mathcal{G}}%
\]
is also a projection operator because $T_{\mathcal{FG}}\circ T_{\mathcal{GF}%
}=i_{\mathcal{G}},$ the identity on $A_{\mathcal{G}},$ whence $P_{\mathcal{G}%
}\circ P_{\mathcal{G}}=T_{\mathcal{FG}}\circ P_{\mathcal{F}}\circ
T_{\mathcal{GF}}\circ T_{\mathcal{FG}}\circ P_{\mathcal{F}}\circ
T_{\mathcal{GF}}=T_{\mathcal{FG}}\circ P_{\mathcal{F}}\circ i_{\mathcal{F}%
}\circ P_{\mathcal{F}}\circ T_{\mathcal{GF}}=i_{\mathcal{G}}$.

A trivial but instructive example is provided by choosing $\mathcal{F}$ and
$\mathcal{G}$ to be the same, with $N=4$ and%
\begin{align*}
f_{1}(x,y)  &  =(0.5x,0.5y),f_{2}(x,y)=(0.5x+0.5,0.5y),\\
f_{3}(x,y)  &  =(0.5x+0.5,0.5y+0.5),f_{4}(x,y)=(0.5x,0.5y+0.5).
\end{align*}
If $r_{\mathcal{F}}=r_{\mathcal{G}}/2$ then $P_{\mathcal{G}}$ is the filter
that corresponds to downsampling followed by doubling the width and height of
each pixel.

\begin{example}
\label{lenaex}Let $X\mathbb{=\{}(x,y)\in\mathbb{R}^{2}|0\leq x,y\leq1\},$
$p\in(0,1)$, $q=1-p,$ and
\begin{align*}
h_{1}(x,y)  &  =(px,py),h_{2}(x,y)=(qx+p,py),\\
h_{3}(x,y)  &  =(qx+p,qy+p),h_{4}(x,y)=(px,qy+p).
\end{align*}
The family of IFSs $\mathcal{H}_{p}:=(X;h_{1},h_{2},h_{3},h_{4})$ has
attractor $X$ and address structure that is independent of $p\in(0,1).$ If we
set $\mathcal{F}=\mathcal{H}_{0.5}$ and $\mathcal{G}=\mathcal{H}_{0.6}$ then
$T_{\mathcal{FG}}:X\mathbb{\rightarrow}X$ is a homeomorphism. The result of
applying the fractal homeomorphism $T_{\mathcal{FG}}$ to a digital
($512\times512$) picture of Lena (left) is illustrated in the middle panel of
Figure \ref{lena}. In effect the middle image is obtained by composing a
projection $P_{\mathcal{F}}$, onto a $512\times512$ pixel grid, with
$T_{\mathcal{FG}}$. The image on the right is the result of applying
$T_{\mathcal{GF}}$ to the middle image. that is, the right-hand image is the
result of applying the projection operator, that we may refer to as a "fractal
filter", $T_{\mathcal{GF}}\circ P_{\mathcal{F}}\circ$ $T_{\mathcal{FG}}$ to
the original Lena.
\begin{figure}[ptb]%
\centering
\includegraphics[
natheight=6.826900in,
natwidth=20.800200in,
height=1.7684in,
width=5.3275in
]%
{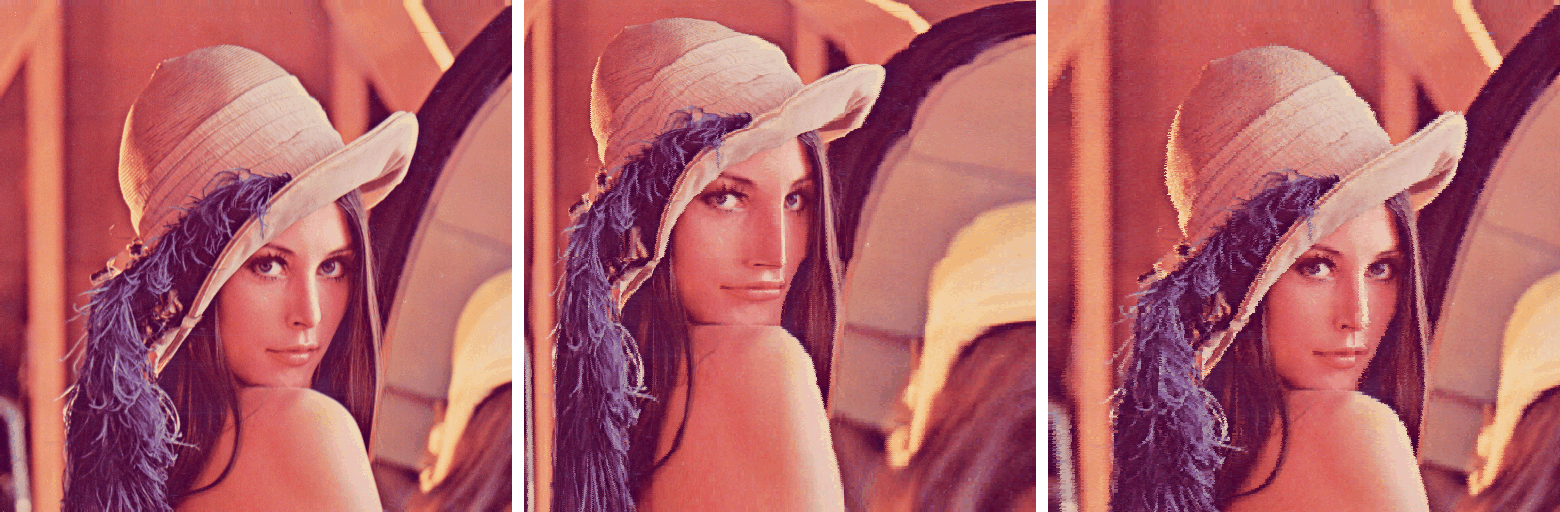}%
\caption{Lena before (left) and after (middle) a fractal homeomorphism has
been applied. The image on the right is the result of applying the
corresponding fractal filter. See Example \ref{lenaex}.}%
\label{lena}%
\end{figure}

\end{example}

\subsection{\label{packsec}Application to packing multiple images into a
single image}

Suppose we have a collection of masks $\{\mathcal{M}_{j}\}$ on $A$ such that
the conditions of Lemma \ref{L:disjoint} are holds true for any pair of masks.
Let the second iterated function system $\mathcal{F}^{\prime}$ be such that
almost all points of the attractor $A^{\prime}$ have a unique address. Thus by
Lemma \ref{L:disjoint}
\[
\pi^{\prime}\circ\tau_{\mathcal{M}_{j}}(x)\neq\pi^{\prime}\circ\tau
_{\mathcal{M}_{k}}(x)
\]
for almost all $x\in A$ and $j\neq k$. Therefore we can consider the
collection of pictures $\Phi_{\mathcal{M}_{j}}(\mathfrak{B}^{\prime})$ as an
almost disjoint fractal decomposition of the picture $\mathfrak{B}^{\prime}$.

This leads us to the following trick. By means of fractal transformations we
map different pictures to different almost disjoint components of
$\mathfrak{B}^{\prime}$. By inverting the transformations we retrieve
(approximations to) the original pictures, as illustrated in the following example.

\begin{example}
\label{expack3} See Figure \ref{allmasked}. Let
\[
\mathcal{F}=(\square,f_{1}(x,y)=(y,0.6(1-x)),f_{2}(x,y)=(y,0.4+0.6x))
\]
and
\[
\mathcal{G}=(\square,f_{1}(x,y)=(y,0.5(1-x)),f_{2}(x,y)=(y,0.5+0.5x)).
\]
We define a family of masks for $\mathcal{F}$ by
\[
\mathcal{M}_{p}=\{M_{1}=\{(x,y)\in\square:x\leq p\},M_{2}=\{(x,y)\in
\square:x>p\}\},p\in\lbrack0.4,0.6].
\]
$\square$ is the attractor of both systems. We denote the corresponding masked
fractal transformation $T_{\mathcal{FG}}:\square\rightarrow\square$ by
$T_{\mathcal{FG}}^{(p)}:\square\rightarrow\square,$ for $p\in\lbrack0.4,0.6]$.
Then $T_{\mathcal{FG}}^{(p)}$ is injective and invertible on its range,
$T_{\mathcal{FG}}^{(p)}(\square)$. Moreover, from Theorem \ref{L:disjoint} it
follows that $\lambda(T_{\mathcal{FG}}^{(p)}(\square)\cap T_{\mathcal{FG}%
}^{(q)}(\square))=0$ for all $p\neq q$, where $\lambda$ is Lebesgue measure.
Let $\mathfrak{P}$ and $\mathfrak{Q}$ be two pictures, each supported on
$\square$. Then $T_{\mathcal{FG}}^{(p)}(\mathfrak{P)}=\mathfrak{P}%
\circ(T_{\mathcal{FG}}^{(p)})^{-1}$ is a picture supported on $T_{\mathcal{FG}%
}^{(p)}(\square)$ and $T_{\mathcal{FG}}^{(q)}(\mathfrak{Q)}=\mathfrak{Q}%
\circ(T_{\mathcal{FG}}^{(p)})^{-1}$ is a picture supported on $T_{\mathcal{FG}%
}^{(q)}(\square)$. We choose $p=0.44,$ $q=0.56,$ $\mathfrak{P}=$Lena and
$\mathfrak{Q}=$Inverted-Lena, where both pictures are $512\times512$. The
digitized combined picture $\mathfrak{R}:=T_{\mathcal{FG}}^{(p)}%
(\mathfrak{P)}\cup T_{\mathcal{FG}}^{(q)}(\mathfrak{Q)}$, also of resolution
$512\times512$, is shown in the middle panel of Figure \ref{allmasked}. Pixels
which correspond to points in $T_{\mathcal{FG}}^{(p)}(\square)$ both
$T_{\mathcal{FG}}^{(q)}(\square)$ are colored white. The left-hand panel in
Figure \ref{allmasked} illustrates the picture $\mathfrak{R\circ
}T_{\mathcal{FG}}^{(p)}$ and the right-hand panel shows $\mathfrak{R\circ
}T_{\mathcal{FG}}^{(q)}$ . Hence we can "store" the two pictureswe have that
for each distinct choice of $p\in\lbrack0.4,0.6]$ the ranges of the
$\mathcal{M}_{0.44}$ and $\mathcal{M}_{0.56}$ respectively.%
\begin{figure}[ptb]%
\centering
\includegraphics[
natheight=6.826900in,
natwidth=20.800200in,
height=1.8406in,
width=5.5492in
]%
{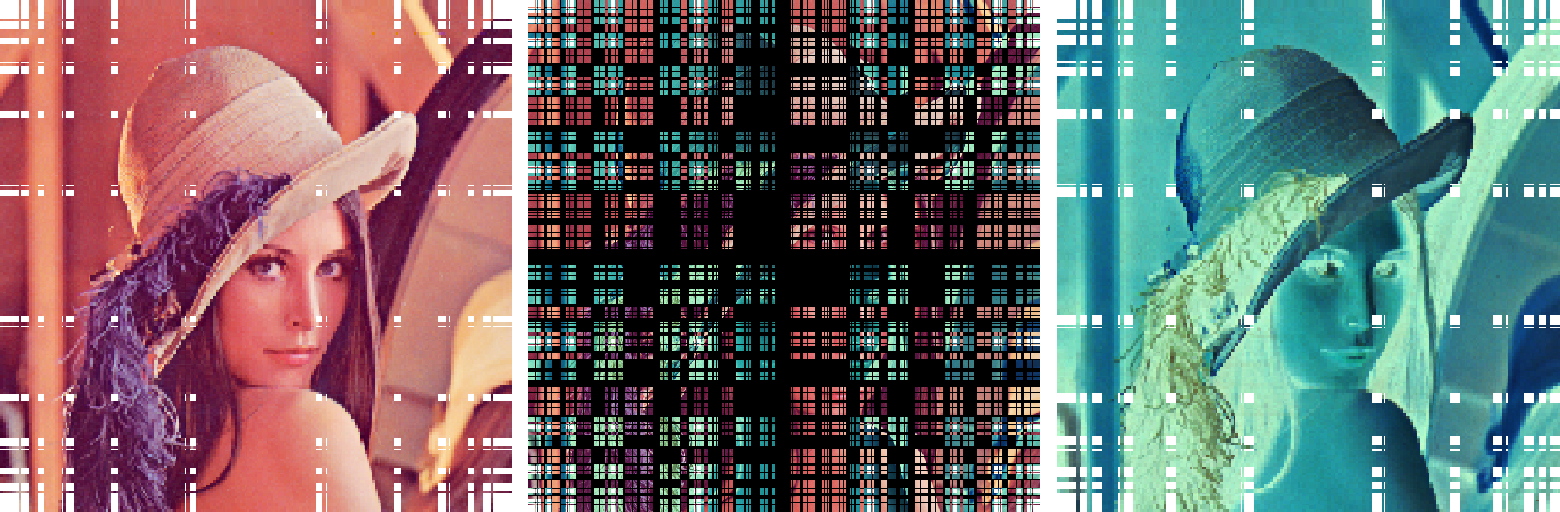}%
\caption{The pictures on the right and left were obtained by masking; the
"encoding" is shown in the middle image. In this example each of the image is
of resolution approximately $512\times512$. See Example \ref{expack3}.}%
\label{allmasked}%
\end{figure}

\end{example}

\subsection{\label{meassec}Measure theory image packing}

Here we describe a different method for encoding several images within a
single image. The method strives to create a single image that is
simultaneously "homeomorphic", under different fractal transformations, to
several different given images. An example of such a single "encoded" image is
shown in the middle panel of Figure \ref{all}; two different fractal
homeomorphisms applied to it (but in the digital realm) yield the images on
the right and the left. The underlying idea is that different invariant
measures, belonging to the same iterated function system, but associated with
different probabilities, are mutually "singular continuous", and are
concentrated on different sets, albeit they have the same support. The method
uses the chaos game algorithm, both to encode and decode.

Consider for example the IFS $\mathcal{F}=([0,1];f_{1}(x)=0.5x,f_{2}%
(x)=0.5x+0.5)$. If we associate probabilities $p_{1}=0.5$ and $p_{2}=0.5$ with
$f_{1}$ and $f_{2}$ respectively, then the associated Markov operator has
invariant probability measure $\mu$ equal to uniform Lebesgue measure
supported on $[0,1]$, concentrated on the set of points whose binary
expansions contain equal proportions of zeros and ones. On the other hand, if
we associate probabilities $\widetilde{p}_{1}=0.1$ and $\widetilde{p}%
_{2}=0.9,$ then the associated Markov operator has invariant probability
measure $\widetilde{\mu}$ that is also supported on $[0,1]$, but singular
continuous and concentrated on the set of points whose binary expansions
contain nine times as many ones as zeros. Hence, if the chaos game is applied
to $\mathcal{F}$ using the first set of probabilities, the points of the
resulting random orbit will tend to be disjoint from those obtained by of
applying the chaos game using the second set of probabilities. The precise
manner in which the orbits of the two systems concentrate is governed by
convergence rates associated with the central limit theorem. The same idea can
be applied to "store" several images in a single image E: a stored image is
retrieved by applying the appropriate fractal transformation to E.
\begin{figure}[ptb]%
\centering
\includegraphics[
natheight=6.826900in,
natwidth=20.800200in,
height=1.8406in,
width=5.5492in
]%
{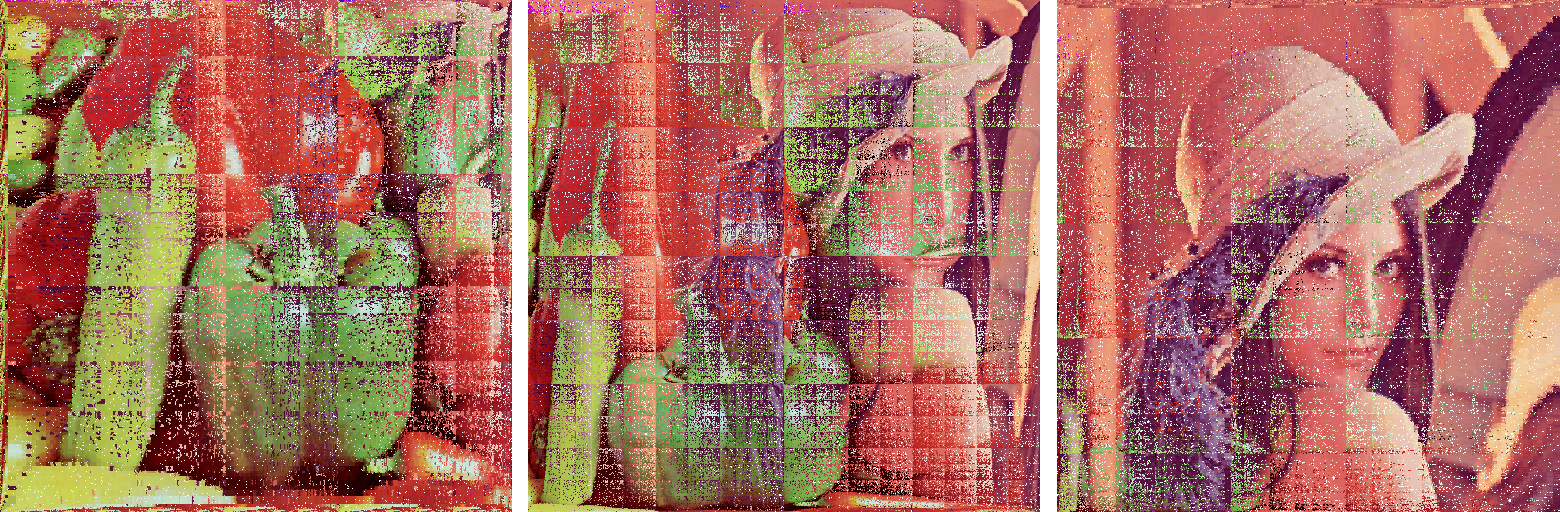}%
\caption{Example of singular measure encoding. Each ordered pair of these
images is related by a fractal homeomorphism.}%
\label{all}%
\end{figure}

We describe the method by means of an example.

\begin{example}
\label{expack4} We use three iterated function systems:%
\begin{align*}
\mathcal{F}  &  =(\square,f_{1},f_{2},f_{3},f_{4}),\text{ }\mathcal{G}%
=(\square,g_{1},g_{2},g_{3},g_{4})\text{, }\mathcal{H}=(\square,h_{1}%
,h_{2},h_{3},h_{4})\text{ where}\\
f_{1}(x,y)  &  =(0.66x,0.34y),\text{ }f_{2}(x,y)=(0.34x+0.66,0.34y),\\
f_{3}(x,y)  &  =(0.34x+0.66,0.66y+0.34),\text{ }f_{4}%
(x,y)=(0.66x,0.66y+0.34);\\
g_{1}(x,y)  &  =(0.34x,0.66y),\text{ }g_{2}(x,y)=(0.66x+0.34,0.66y),\\
g_{3}(x,y)  &  =(0.66x+0.34,0.34y+0.66),\text{ }g_{4}%
(x,y)=(0.34x,0.34y+0.66);\\
h_{1}(x,y)  &  =(0.5x,0.5y),\text{ }h_{2}(x,y)=(0.5x+0.5,0.5y),\\
h_{3}(x,y)  &  =(0.5x+0.5,0.5y+0.5),\text{ }h_{4}(x,y)=(0.5x,0.5y+0.5).
\end{align*}
Then $\mathcal{F}$ is associated with probabilies $P=\{p_{i}=area\left(
f_{i}(\square\right)  ):$ $i=1,2,3,4\}$ Similarly the iterated function system
$\mathcal{G}$ is associated with probabilities $\widetilde{P}=\{\widetilde{p}%
_{i}=area(g_{i}(\square)):i=1,2,3,4\}.$

The goal is to "store" two standard digital color images, Pepper and Lena,
each $512\times512,$ in a single color image $E,$ also $512\times512$ . The
image $E$ is supported on $\square$ and associated with two probability
measures, $\mu_{\mathcal{H}}$ and $\widetilde{\mu}_{\mathcal{H}},$ invariant
under $\mathcal{H}$ with probabilities $P$ and $\widetilde{P}$ respectively.

(1) In order to "encode" Pepper, we run a coupled chaos game algorithm with
$\mathcal{F}$ associated with Pepper, supported on a copy of $\square,$ and
$\mathcal{H}$ associated with $E,$ with (supposedly) i.i.d. probabilities $P$;
that is, we compute a random sequence of points $\{(X_{k},Z_{k})\in
\square\times\square:k=0,1,2,...K\}$ where $K=10^{6},$ where $X_{0}%
=(0,0)\in\square$ (associated with Pepper) and $Z_{0}=(0,0)\in\square$
(associated with $E$) and
\[
X_{k}=f_{\sigma_{k}}(X_{k-1}),Z_{k}=h_{\sigma_{k}}(Z_{k-1})\text{ for
}k=1,2,...,K,
\]
where $\sigma_{k}=i$ with probability $p_{i}$. At each step the pixel
containing $Z_{k}$ in the (initially blank) image $E$ is plotted in the color
of Pepper at the pixel containing $X_{k}$. At the end of this process $E$
consists of an "encoded" version of Pepper.

(2) In order to "encode" Lena, we again run a coupled chaos game algorithm
with $\mathcal{G}$ associated with Lena, supported on a copy of $\square,$ and
$\mathcal{H}$ associated with $E$ (already "painted" with an encoding of
Pepper) with probabilities $\widetilde{P}$; that is, we compute a sequence of
points $\{(Y_{k},\widetilde{Z}_{k})\in\square\times\square:k=0,1,2,...K\}$
where $K=500,000,$ where $Y_{0}=(0,0)\in\square$ (associated with Lena) and
$Z_{0}=(0,0)\in\square$ (associated with $E$) and
\[
Y_{k}=f_{\sigma_{k}}(Y_{k-1}),\widetilde{Z}_{k}=h_{\sigma_{k}}(\widetilde{Z}%
_{k-1})\text{ for }k=1,2,...,K,
\]
where $\sigma_{k}=i$ with probability $\widetilde{p}_{i}$. At each step the
pixel containing $\widetilde{Z}_{k}$ in the image $E$ is plotted in the color
of Lena at the pixel containing $Y_{k}$. (The pixel is overwritten by the
latest colour value.) We have used half as many iterations in the encoding of
Lena as we did for Pepper, because a proportion of the points that correspond
to Pepper are overwritten by points corresponding to Lena.

An image $E$ that is a realization of steps (1) and (2) is shown in the
central panel of Figure \ref{all}. The image $E$ is approximately homeomorphic
to both of the images Pepper and Lena, under the fractal transformations
$T_{\mathcal{FH}}$ and $T_{\mathcal{GH}}$ respectively, that is%
\[
Pepper\doteq T_{\mathcal{FH}}(E)\text{ and }Lena\doteq T_{\mathcal{GH}}(E).
\]
In practice, to obtain the decoded images, shown on the left and right hand
sides of Figure \ref{all}, we use the chaos game algorithm again, as follows.

(3) In order to "decode" Pepper, we run a coupled chaos game algorithm, with
probabilities $P,$ with $\mathcal{F}$ associated with an image (initially
blank), supported on a copy of $\square,$ and $\mathcal{H}$ associated with
$E$ (now encoding both Pepper and Lena). That is, we compute a random sequence
of points $\{(X_{k},Z_{k})\in\square\times\square:k=0,1,2,...K\} $ where
$K=10^{6},$ where $X_{0}=(0,0)\in\square$ (associated with Pepper) and
$Z_{0}=(0,0)\in\square$ (associated with $E$) and
\[
X_{k}=f_{\sigma_{k}}(X_{k-1}),Z_{k}=h_{\sigma_{k}}(Z_{k-1})\text{ for
}k=1,2,...,K,
\]
where $\sigma_{k}=i$ with probability $p_{i}$. At each step the pixel
containing $X_{k}$ in the (initially blank) copy of $\square$ is plotted in
the color of $E$ at the pixel containing $Z_{k}$. The result of such a
decoding, starting from the encoded $E$ illustrated in the middle panel, is
shown in the left panel in Figure \ref{all}.

(4) In order to decode Lena, we run a coupled chaos game algorithm, with
probabilities $\widetilde{P},$ with $\mathcal{G}$ associated with an image
(initially blank but to become the decoded image), supported on a copy of
$\square,$ and $\mathcal{H}$ associated with $E$. That is, we compute a random
orbit $\{(Y_{k},\widetilde{Z}_{k})\in\square\times\square:k=0,1,2,...K\}$
where $K=10^{6},$ where $Y_{0}=(0,0)\in\square$ (associated with Pepper) and
$\widetilde{Z}_{0}=(0,0)\in\square$ (associated with $E$) and
\[
Y_{k}=f_{\sigma_{k}}(X_{k-1}),\widetilde{Z}_{k}=h_{\sigma_{k}}(\widetilde{Z}%
_{k-1})\text{ for }k=1,2,...,K,
\]
where $\sigma_{k}=i$ with probability $\widetilde{p}_{i}$. At each step the
pixel containing $Y_{k}$ in the (initially blank) copy of $\square$ is plotted
in the color of $E$ at the pixel containing $\widetilde{Z}_{k}$. The result of
following this decoding algorithm, starting from the encoded $E$ in the middle
panel, is shown in the left panel in Figure \ref{all}.
\end{example}

\section*{Acknowledgments}

We thank Louisa Barnsley for help with the illustrations.

\end{document}